\documentclass[10pt]{amsart}

\textwidth13cm

\usepackage{amssymb}
\usepackage[all]{xy}
\usepackage{enumerate}

\newcommand{\rar}[1]{\stackrel{#1}{\longrightarrow}}

\newcommand{\bA}{{\mathbb A}}

\newcommand{\bF}{{\mathbb F}}

\newcommand{\bQ}{{\mathbb Q}}

\newcommand{\bZ}{{\mathbb Z}}

\newcommand{\cA}{{\mathcal A}}

\newcommand{\cD}{{\mathcal D}}

\newcommand{\cO}{{\mathcal O}}

\newcommand{\cQ}{{\mathcal Q}}

\newcommand{\cW}{{\mathcal W}}

\newcommand{\td}{\tilde{d}}
\newcommand{\tf}{\tilde{f}}

\newcommand{\tv}{\tilde{v}}
\newcommand{\tw}{\widetilde{w}}
\newcommand{\tx}{\widetilde{x}}
\newcommand{\ty}{\widetilde{y}}
\newcommand{\tz}{\widetilde{z}}

\newcommand{\rx}{\widetilde{x^{[2]}}}
\newcommand{\ry}{\widetilde{y^{[2]}}}
\newcommand{\rz}{\widetilde{z^{[2]}}}

\newcommand{\nc}{\newcommand}

\nc\wh{\widehat}

\nc\on{\operatorname}

\nc\Gr{\on{Gr}}

\nc\Fl{\on{Fl}}

\newcommand{\limto}{{\displaystyle\lim_{\longrightarrow}}}
\newcommand{\rightlim}{\mathop{\limto}}


\newcommand{\leftlim}{\mathop{\displaystyle\lim_{\longleftarrow}}}
\newcommand{\limfromn}{\leftlim\limits_{\raise3pt\hbox{$n$}}}
\newcommand{\limton}{\rightlim\limits_{\raise3pt\hbox{$n$}}}


\newcommand{\rightlimit}[1]{\mathop{\lim\limits_{\longrightarrow}}\limits%
                    _{\raise3pt\hbox{$\scriptstyle #1$}}}

\newcommand{\leftlimit}[1]{\mathop{\lim\limits_{\longleftarrow}}\limits%
                    _{\raise3pt\hbox{$\scriptstyle #1$}}}

\newcommand{\iso}{\buildrel{\sim}\over{\longrightarrow}}
\newcommand{\mono}{\hookrightarrow}

\DeclareMathOperator{\Spec}{{Spec}}

\makeatletter

\newcommand{\Rmnum}[1]{\expandafter\@slowromancap\romannumeral #1@}
\makeatother

\newtheorem{Th}{Theorem}
\newtheorem{pr}{Proposition}[section]
\newtheorem{lm}[pr]{Lemma}

\newtheorem{cor}[pr]{Corollary}

\newtheorem{example}{Example}

\theoremstyle{definition}

\newtheorem{rem}[pr]{Remark}
\newtheorem{cl}[pr]{Claim}

\numberwithin{equation}{section}

\begin{document}

\title[Differential operators on a smooth variety over $\bZ/p^n\bZ$]{On the center of the ring of differential operators on a smooth variety over $\bZ/p^n\bZ$.}

\author{Allen Stewart \quad Vadim Vologodsky}

\address{Department of Mathematics, University of Oregon, Eugene, OR, 97403, USA}
\email{allens@uoregon.edu, vvologod@uoregon.edu}



\keywords{Deformations of algebras, Poisson algebras, Witt vectors.}

\subjclass[2010]{Primary 14F10, 14G17;  Secondary 16S34, 16S80.}


\begin{abstract} We compute the center of the ring of PD differential operators on a smooth variety over $\bZ/p^n\bZ$ confirming a conjecture of Kaledin. More generally, given an associative algebra $A_0$ over 
$\bF_p$ and its flat deformation $A_n$ over $\bZ/p^{n+1}\bZ$   
we prove that under a certain non-degeneracy condition the center of $A_n$ is isomorphic to the ring of length $n+1$ Witt vectors over the center of $A_0$. 
\end{abstract}

\maketitle

\section{Introduction}  
\subsection{}
Let $X_n$ be a smooth scheme over the spectrum $S_n$ of the ring of length $n+1$ Witt vectors $W_{n+1}(k)$ over a perfect field $k$ of characteristic $p$, $X_0$ its special fiber over $k$,
 and let $D_{X_n}=D_{X_n/S_n}$ be the sheaf of PD differential operators on $X_n$. We prove in Theorem \ref{3} that
the center $Z(D_{X_n})$ of   $D_{X_n}$ is canonically isomorphic to the ring of  Witt vectors $W_{n+1}(S^{\cdot}T_{X_0})$  over the symmetric algebra of the tangent sheaf of $X_0$. For $n=0$ we recover the classical
isomorphism (see, {\it e.g} \cite{bmr})
\begin{equation}\label{pcurv}
Z(D_{X_0})\simeq S^{\cdot}T_{X_0}
\end{equation}
 given by the $p$-curvature map. The general result was conjectured by Kaledin. For $p \ne 2$ he even proposed a construction of the map  $$W_{n+1}(S^{\cdot}T_{X_0})\to Z(D_{X_n}).$$
\subsection{} In fact, we prove a more general result. Let $A_n $ be a flat associative algebra over $W_{n+1}(k)$, $n>0$. Set 
$$A_i= A_n \otimes _{W_{n+1}(k)} W_{i+1}(k), \quad  0\leq i \leq n,$$
 and let $Z(A_i)$ be the center of $A_i$. The first order deformation $A_1$ yields a natural biderivation on $Z(A_0)$ (\S \ref{def}, (\ref{poi})):
 $$\{,\}:  Z(A_0)\otimes_k Z(A_0) \to Z(A_0).$$
 We shall say that the deformation $A_n$ of $A_0$ is non-degenerate if  $spec\, Z(A_0)$ is smooth over $k$ and the biderivation, $\{,\}$, is associated with a non-degenerate bivector 
 field, $\mu\in  \bigwedge^2T_{Z(A_0)}$, on $spec\, Z(A_0)$.
 
 If $z$ is an element of $Z(A_0)$ and $\tilde z\in A_n$ is a lifting
of $z$, then, for every $0 \leq i \leq n$,  the element $p^{i} \tilde z^{p^{n-i}}\subset A_n$ is central and does not depend on the choice of $\tilde z$. We prove in Theorem \ref{p} that for $p \ne 2$ the map
\begin{equation}\label{map}
\phi_n: W_{n+1}(Z(A_0)) \to Z(A_n)
\end{equation}
defined by the formula
\begin{equation}\label{dmapin}
\phi_n((z_1, z_2, \cdots z_{n+1})) = \sum_{i=0}^n p^{i} \tilde z_{i+1}^{p^{n-i}}
\end{equation} 
is a homomorphism of rings
and if the deformation, $A_n$, of $A_0$ is non-degenerate then $\phi_n$ is an isomorphism. Note that the left-hand 
side of (\ref{map}) depends only on the algebra $A_0$ and not on the deformation $A_n$.

\subsection{} For $p=2$ the map $\phi_n$ given by formula (\ref{dmapin}) is neither additive nor multiplicative\footnote{We are grateful to Pierre Berthelot for pointing out this problem.}.  However, 
under the additional assumption that the deformation, $A_n$, is non-degenerate  and the differential  $2$-form $\omega=\mu^{-1} \in \Omega^2_{Z(A_0)}$ associated with $\{,\}$ is exact,
$\omega=d\eta$, we can correct our map (\ref{dmapin})  as follows.  The Poisson algebra $Z(A_0)$ has a restricted structure in the sense of Bezrukavnikov-Kaledin (\cite{bk}):  if $z\in Z(A_0)$ and $t_z$ is the corresponding Hamiltonian vector field on  $spec\, Z(A_0)$ {\it i.e.}, $dz=i_{t_z}\omega$,  we set
 \begin{equation}\label{rs}
z^{[p]}= L^{p-1}_{t_z} i_{t_z} \eta - i_{t_z^{[p]}}\eta \in Z(A_0),
\end{equation}
 where $t_z^{[p]}\in T_{Z(A_0)}$ is the $p$-th power in the restricted Lie algebra of vector fields and $L_{t_z}$ is the Lie derivative.
 For $p=2$ we define 
 \begin{equation}\label{dmaptwo}
 \phi_n((z_1, z_2, \cdots z_{n+1})) = \sum_{i=0}^{n-1} 2^{i} (\tilde z_{i+1}^2 +2\widetilde{z_{i+1}^{[2]}}) ^{2^{n-i-1}} + 2^n \tilde z_{n+1}.
\end{equation} 
 We prove in Theorem \ref{2} that the map $\phi_n: W_{n+1}(Z(A_0)) \to Z(A_n)$ given by the above formula is an isomorphism of rings.
 
\subsection{} According to an observation of Kanel-Belov and Kontsevich (\cite{kk}), for every smooth scheme, $X_n$, over $S_n$ the bivector field on $Z(D_{X_0}) \simeq S^{\cdot}T_{X_0} $ induced by the deformation $X_n$ equals, up to sign, the bivector field on 
 $S^{\cdot}T_{X_0}$ induced by the canonical symplectic structure on the cotangent bundle ${\bf T}^*_{X_0}$. In particular, the former bivector field is non-degenerate and the associated differential form has a canonical primitive $\eta\in \Omega^1_{{\bf T}^*_{X_0}}$. Thus, as a corollary of the above results, we find a canonical isomorphism of sheaves of rings 
 \begin{equation}\label{mdiff}
\phi_n:  W_{n+1}(S^{\cdot}T_{X_0}) \simeq Z(D_{X_n}).
 \end{equation}

\subsection{Acknowledgments.}  This paper owes its existence to Dima Kaledin who explained to the second author his conjecture on the center of the ring of differential operators.  Also,  would like to thank Pierre Berthelot, Arthur Ogus, and Victor Ostrik  for helpful conversations related to the subject of this paper.

 \bigskip

\section{Main result. Odd characteristic case.} 
\subsection{}\label{def} 
 Let $R_n$ be a commutative algebra flat over $\bZ/p^{n+1}\bZ$, $n>0$.  For $0\leq m \leq n$ we set 
$$R_m= R_n \otimes _{\bZ/p^{n+1}\bZ} \bZ/p^{m+1}\bZ.$$
 By a level $n$ deformation of a flat associative $R_0$-algebra, $A_0$, we mean a flat associative $R_n$-algebra, $A_n$, together with an isomorphism
$A_n\otimes_{R_n}R_0\cong A_0$. Given such $A_n$ we denote by $A_m$ the corresponding algebra over $R_m$.
We will write  
\[
A_m\rar{r}A_{m-1}, \quad
r(x)= x \mod p^m
\] 
for the reduction homomorphism. The preimage of $x\in pA_m\subset A_m$ under the isomorphism
\[
 A_{m-1} \rar{p} pA_m 
\]
is denoted by $\frac{1}{p}x$.
We will write $Z_m=Z(A_m)$ for the center of $A_m$. The following Lemma is straightforward. 
   \begin{lm}\label{lm1} Let $x\in Z_i$, $y\in Z_j$, $0\leq i\leq j \leq n $, and let $\tilde x, \tilde y\in A_n$ be liftings of $x$ and $y$ respectfully. Then  
   \begin{enumerate}
\item $[\tx,\ty]\equiv 0\mod p^{j+1}$
\item $[\tx,\ty] \mod p^{i+j+2} \in Z_{i+j+1}$.
\item The element  $\frac{1}{p^{j+1}}[\tx,\ty] \mod p^{i+1} \in Z_i$ is independent of the choice of liftings $\tx$, $\ty$.
\item  The $R_n$-linear map 
 \begin{equation}\label{genpoi}
Z_i \otimes _{R_n} Z_j \to Z_i, \quad x\otimes y  \mapsto \frac{1}{p^{j+1}}[\tx,\ty] \mod p^{i+1}
\end{equation}
is a derivation with respect to the first argument. 
\item  The element $(\tx)^p \mod p^{i+2} $ lies in $ Z_{i+1}$  and is independent of  the choice of the lifting $\tx$.
\end{enumerate}
\end{lm}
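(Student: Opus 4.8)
The plan is to verify the five assertions in turn, each reducing to a short commutator or $p$-th power computation modulo an appropriate power of $p$, using flatness of $A_n$ over $\bZ/p^{n+1}\bZ$ to move freely between ``divisible by $p^m$'' and ``element of $pA_m$'' and to identify $A_{m-1}\xrightarrow{p}pA_m$. First I would fix liftings $\tx,\ty\in A_n$ of $x\in Z_i$, $y\in Z_j$ and note that for any $a\in A_n$ the reductions $\bar a\mod p^{i+1}$, $\bar a\mod p^{j+1}$ commute with $x$, $y$ respectively; hence $[\tx,a]\in p^{i+1}A_n$ for all $a$ (as $x$ is central mod $p^{i+1}$), and likewise $[\ty,a]\in p^{j+1}A_n$. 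Applying the second statement to $a=\ty$ gives (1): $[\tx,\ty]\in p^{j+1}A_n$, so $[\tx,\ty]\equiv 0\bmod p^{j+1}$.

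For (2)--(4) I would set $c=\frac{1}{p^{j+1}}[\tx,\ty]\in A_{n-j-1}$, the element with $p^{j+1}c'=[\tx,\ty]$ for a lift $c'$; the point is to show $c$ is central mod $p^{i+1}$. For any $a\in A_n$ the Jacobi-type identity $[[\tx,\ty],a]=[[\tx,a],\ty]+[\tx,[\ty,a]]$ shows $[[\tx,\ty],a]\in p^{i+1}p^{j+1}A_n=p^{i+j+2}A_n$, since $[\tx,a]\in p^{i+1}A_n$ is killed by bracketing with $y$ up to $p^{j+1}$ (more precisely, for $u\in p^{i+1}A_n$ one has $[u,\ty]\in p^{j+1}A_n$ because $\ty$ is central mod $p^{j+1}$ and bracketing is biadditive; and symmetrically $[\tx,v]\in p^{i+1}A_n$ for $v\in p^{j+1}A_n$). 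Dividing by $p^{j+1}$ and using flatness, $[c',a]\in p^{i+1}A_n$, i.e. $c\bmod p^{i+1}\in Z_i$ and $[\tx,\ty]\bmod p^{i+j+2}\in Z_{i+j+1}$, which is (2). Independence of liftings in (3): replacing $\tx$ by $\tx+p^{i+1}s$ changes $[\tx,\ty]$ by $p^{i+1}[s,\ty]\in p^{i+1}p^{j+1}A_n=p^{i+j+2}A_n$ (again since $\ty$ is central mod $p^{j+1}$), so $\frac1{p^{j+1}}[\tx,\ty]$ changes by an element of $p^{i+1}A_n$; symmetrically for $\ty$. For (4), biadditivity of the commutator and compatibility of the various divisions-by-$p$ give $R_n$-linearity in each slot, and the Leibniz rule $[\tx,\ty\tz]=[\tx,\ty]\tz+\ty[\tx,\tz]$ reduced mod the appropriate power of $p$ yields the derivation property in the first argument (here one must check $\ty\cdot(\text{class of }[\tx,\tz])$ is computed after reduction, which is legitimate because $[\tx,\tz]\bmod p^{i+j+2}$ is already central by (2)).

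For (5) I would expand, for $\tx,\tx+p^{i+1}s$ two liftings of $x\in Z_i$, the difference $(\tx+p^{i+1}s)^p-(\tx)^p$. Modulo $p^{i+2}$ the cross terms each carry at least one factor $p^{i+1}$ times a commutator $[\tx,\text{stuff}]\in p^{i+1}A_n$ (since $x$ is central mod $p^{i+1}$, all the noncommutative corrections vanish mod $p^{2i+2}$), and the purely $p^{i+1}s$ terms contribute $p^{p(i+1)}s^p\equiv0\bmod p^{i+2}$ because $p(i+1)\ge i+2$ for $p\ge2$; more carefully, $(\tx+p^{i+1}s)^p\equiv \tx^p+p^{i+1}\sum_{k}\tx^{k}s\,\tx^{p-1-k}\bmod p^{2i+2}$ and $\sum_k\tx^k s\tx^{p-1-k}\equiv p\,\tx^{p-1}s\bmod p^{i+1}$ by moving $s$ past $\tx$ at the cost of commutators in $p^{i+1}A_n$, so the correction is $\equiv p^{i+2}\tx^{p-1}s\equiv0$. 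Similarly $[\tx^p,a]=\sum_{k}\tx^k[\tx,a]\tx^{p-1-k}\in p^{i+1}A_n$ and in fact, writing $[\tx,a]=p^{i+1}b$ and commuting, $\equiv p\cdot p^{i+1}\tx^{p-1}b=p^{i+2}\tx^{p-1}b\equiv0\bmod p^{i+2}$, so $\tx^p\bmod p^{i+2}\in Z_{i+1}$. The main obstacle throughout is bookkeeping the interaction of ``divisible by $p^m$'' with noncommutativity: one repeatedly needs that a product $p^a u\cdot p^b v$ with $u,v$ not both central still lies in $p^{a+b}A_n$, which is automatic, but that commutators of such elements pick up \emph{extra} powers of $p$ coming from the centrality hypotheses, and keeping these exponents aligned with the claimed congruences (especially getting the sharp $p^{i+j+2}$ and $p^{i+2}$, not one less) is the only place care is required; everything else is formal manipulation legitimized by flatness over $\bZ/p^{n+1}\bZ$.
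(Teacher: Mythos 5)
The paper simply asserts this lemma ``is straightforward'' and gives no proof, so there is no argument in the paper to compare against; your proof is precisely the kind of direct verification the authors have in mind, and the key mechanisms you use---that $[\tx,a]\in p^{i+1}A_n$ for all $a$ once $x\in Z_i$, the Jacobi identity to gain the factor $p^{i+j+2}$, flatness to identify $p^mA_n\simeq A_{n-m}$, and the expansion $[\tx^p,a]=\sum_k\tx^k[\tx,a]\tx^{p-1-k}$ together with $\sum_k\tx^k b\tx^{p-1-k}\equiv p\,b\tx^{p-1}\bmod p^{i+1}$---are all correct and land exactly the claimed exponents. One small slip in the treatment of (4): you invoke the Leibniz identity $[\tx,\ty\tz]=[\tx,\ty]\tz+\ty[\tx,\tz]$, which expands a product in the \emph{second} argument, whereas the lemma claims the map is a derivation in the \emph{first} argument (the one landing in the commutative ring $Z_i$); the identity you actually need is $[\tx_1\tx_2,\ty]=\tx_1[\tx_2,\ty]+[\tx_1,\ty]\tx_2$, and then one uses $[[\tx_1,\ty],\tx_2]\in p^{i+j+2}A_n$ to commute $[\tx_1,\ty]$ past $\tx_2$ before reducing mod $p^{i+1}$, exactly parallel to the centrality check you wrote. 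With that identity swapped in, the argument is complete.
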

In the case $i=j=0$ the map (\ref{genpoi}) deserves a special notation:
 \begin{equation}\label{poi}
\{,\}:  Z(A_0)\otimes_{R_0} Z(A_0) \to Z(A_0), \quad \{x, y\} = \frac{1}{p}[\tx,\ty] \mod p.
\end{equation}
By part (4) of the Lemma $\{,\}$ is a derivation with respect to each argument. 
We also remark that if $n>1$ the map $\{,\}$ satisfies the Jacobi identity. This follows by dividing the identity
$$[\tx, [\ty, \tz]]+ [\tz, [\tx, \ty]] + [\ty, [\tz, \tx]]=0$$
by $p^2$ and reducing the result modulo $p$.
Thus, if $n>1$ the bracket $\{,\}$ defines a Poisson structure on $Z_0$.  

We shall say that $A_n$ is a non-degenerate deformation of $A_0$ if $A_0$ is a smooth $R_0$-algebra and the map  $\{,\}$ is associated with
a non-degenerate bivector field $\mu\in  \bigwedge^2T_{Z_0/R_0}$.
 That is
\[
\{x,y\}=< \mu, dx \otimes dy>,
\]
for every $x,y \in Z_0$.  Viewing $\mu$ as $Z_0$-linear isomorphism $T^*_{Z_0/R_0}\to T_{Z_0/R_0}$ and taking its inverse $T_{Z_0/R_0}\to T^*_{Z_0/R_0}$ we obtain a differential $2$-form,
$\omega= \mu^{-1} \in  \Omega^2_{Z_0/R_0}$.  The form $\omega$ is closed if and only if the bracket $\{,\}$ is Poisson.

We remark that our non-degeneracy condition depends only on the reduction of $A_n$ modulo $p^2$. 

\subsection{}
Let $W_{m+1}(Z_0)$ be the ring of length $m+1$ Witt vectors of $Z_0$. For $0\leq m \leq n$ we define a map 
$$\phi_m: W_{m+1}(Z_0)\to A_{m}$$
by 
\begin{equation}\label{defin}
\phi_m (z_1,\ldots,z_{m+1})=\sum_{i=0}^{m}p^i\tz_{i+1}^{p^{m-i}},
\end{equation}
where $\tz_i $ is a lifting of $z_i\in Z_0$ in $A_m$.
 \begin{cl}
The map $\phi$ is well defined and the image of $\phi$ is contained in $Z_{m}$:
 \begin{equation}\label{phinetwo}
 \phi_m: W_{m+1}(Z_0)\to Z_{m}
 \end{equation}
\end{cl}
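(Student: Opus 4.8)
The plan is to reduce the Claim to part (5) of Lemma~\ref{lm1}, iterated up the tower. Note first that the truncation $A_0,A_1,\dots,A_m$ is itself a level $m$ deformation of $A_0$ (flatness is preserved under the relevant base change), so Lemma~\ref{lm1} applies verbatim with $n$ replaced by $m$ and all liftings taken in $A_m$. The crucial intermediate statement I would isolate is: \emph{for every $x\in Z_0$, every lifting $\tx\in A_m$ of $x$, and every $0\le k\le m$, the element $x^{(k)}:=\tx^{p^k}\bmod p^{k+1}$ lies in $Z_k$ and is independent of the choice of $\tx$.} The proof is by induction on $k$, the case $k=0$ being trivial. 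For the inductive step one observes that $\tx^{p^k}$ is a legitimate lifting of the central element $x^{(k)}\in Z_k$; applying Lemma~\ref{lm1}(5) with $i=k$ to $x^{(k)}$ and this lifting shows that $\bigl(\tx^{p^k}\bigr)^p\bmod p^{k+2}=\tx^{p^{k+1}}\bmod p^{k+2}$ lies in $Z_{k+1}$ and depends only on $x^{(k)}$, hence, by the inductive hypothesis, only on $x$; this is $x^{(k+1)}$.

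Granting this, well-definedness of $\phi_m$ is immediate: if $\tz_{i+1},\tz_{i+1}'\in A_m$ are two liftings of $z_{i+1}\in Z_0$, then by the intermediate statement $\tz_{i+1}^{p^{m-i}}\equiv (\tz_{i+1}')^{p^{m-i}}\pmod{p^{m-i+1}}$, whence $p^i\tz_{i+1}^{p^{m-i}}\equiv p^i(\tz_{i+1}')^{p^{m-i}}\pmod{p^{m+1}}$; since $p^{m+1}A_m=0$, the $i$-th summand of $\phi_m(z_1,\dots,z_{m+1})$ is unchanged, and hence so is the whole sum. For the inclusion~(\ref{phinetwo}), fix $a\in A_m$ and $0\le i\le m$: since $\tz_{i+1}^{p^{m-i}}\bmod p^{m-i+1}=z_{i+1}^{(m-i)}$ lies in $Z_{m-i}=Z(A_{m-i})$, we have $[\tz_{i+1}^{p^{m-i}},a]\in p^{m-i+1}A_m$, and therefore $p^i[\tz_{i+1}^{p^{m-i}},a]\in p^{m+1}A_m=0$. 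Thus each summand $p^i\tz_{i+1}^{p^{m-i}}$ is already central in $A_m$, and so is $\phi_m(z_1,\dots,z_{m+1})$.

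The only point demanding genuine care — and the reason the statement about $x^{(k)}$ is set up as an induction rather than a single application of Lemma~\ref{lm1}(5) — is the coherence of the iterated $p$-th power construction: one must check at each stage that $\tx^{p^k}$ really does reduce to the previously constructed central element $x^{(k)}$, so that the canonicity supplied by Lemma~\ref{lm1}(5) propagates. Beyond that the argument is bookkeeping with $p$-adic valuations in $A_m$, and commutativity of $A_m$ is never invoked; it is replaced throughout by the centrality assertions of Lemma~\ref{lm1}. I do not anticipate any further obstacle.
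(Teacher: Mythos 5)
Your proposal is correct and follows essentially the same route as the paper: both derive well-definedness and centrality from Lemma~\ref{lm1}(5), iterated up the tower, to get that $\tz_{i+1}^{p^{m-i}} \bmod p^{m-i+1}$ lies in $Z_{m-i}$ and is independent of the lifting, then multiply by $p^i$ to conclude in $A_m$. The paper compresses the iteration of Lemma~\ref{lm1}(5) into a single invocation, whereas you spell out the induction on $k$ that underlies it — a useful clarification, but not a different argument.
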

\begin{proof}
If $\tz_{i+1}$ and $\tz_{i+1}'$ are liftings of $z_{i+1}$, then by part (5) of Lemma \ref{lm1},  we have 
\[
(\tz_{i+1})^{p^{m-i}} \equiv (\tz_{i+1}')^{p^{m-i}} \mod   p^{m-i+1} \in Z_{m-i} 
\]
 which implies that 
\[
p^i (\tz_{i+1})^{p^{m-i}} = p^i (\tz_{i+1}')^{p^{m-i}}  \in Z_{m}. 
\]
\end{proof}

\begin{Th}\label{p}
Suppose $p\ne 2$.  Then, for every flat associative algebra $A_n$ over $R_n$ and every $0\leq m \leq n$, the map $\phi_m$ is a ring homomorphism.  If, in addition, the deformation $A_n$ is non-degenerate
the homomorphism
\begin{equation}\label{mm}
\Phi_m: W_{m+1}(Z_0)\otimes _{W_{m+1}(R_0)} R_m \to Z_{m}, \quad \Phi_m(z\otimes a)= a\phi_m(z)
 \end{equation}
is an isomorphism. Here the $W_{m+1}(R_0)$-module structure on $W_{m+1}(Z_0)$ is induced by the embedding 
$R_0\mono Z_0$ and  $W_{m+1}(R_0)$-module structure on $R_m$ is given by the homomorphism $ W_{m+1}(R_0) \to R_m$.
\end{Th}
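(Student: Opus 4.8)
The plan is to prove the two assertions separately, in each case by induction on $m$ (the case $m=0$ being trivial, since $\phi_0=\id_{Z_0}$). For the first assertion, that $\phi_m$ is a ring homomorphism, I would begin by reducing everything to a statement about liftings. Since the universal Witt addition and multiplication polynomials $S_j,P_j$ have integer coefficients and $Z_0$ is commutative, the elements $S_j(\underline z,\underline y),P_j(\underline z,\underline y)$ lie in $Z_0$, and one may compute $\phi_m(\underline z+_W\underline y)$ and $\phi_m(\underline z\cdot_W\underline y)$ using the particular liftings $S_j(\underline{\tilde z},\underline{\tilde y}),\,P_j(\underline{\tilde z},\underline{\tilde y})\in A_m$. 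Each identity to be proved thus becomes an equality in $A_m$ between two noncommutative expressions in the liftings $\tilde z_k,\tilde y_k$ which coincide whenever $A_m$ is replaced by a commutative ring; the whole content is to control the noncommutative error.

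The basic tool here is the estimate, obtained by iterating Lemma \ref{lm1}(1),(3), that every $k$-fold iterated commutator of liftings of elements of $Z_0$ lies in $p^{k-1}A_n$, with leading term computed through $\{,\}$. Combining this with the compatibility $r\circ\phi_m=\phi_{m-1}\circ W_m(F_{Z_0})\circ R$ — where $r$ is reduction mod $p^m$, $R\colon W_{m+1}\to W_m$ the restriction, $F_{Z_0}$ the Frobenius $z\mapsto z^p$, all ring homomorphisms — and the inductive hypothesis that $\phi_{m-1}$ is a homomorphism for $A_{m-1}$, one sees that the difference of the two sides of each identity reduces to $0$ in $Z_{m-1}$, hence lies in $\ker(Z_m\to Z_{m-1})=p^mZ_0$; dividing by $p^m$ leaves a residue in $Z_0$. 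Expanding the $p^{m-i}$-th powers with the commutator estimate, this residue is a $Z_0$-linear combination of iterated Poisson brackets whose coefficients are integers of the shape $\tfrac{p(p-1)}{2}\binom{p-2}{i-1}$ and products thereof — divisible by $p$ precisely when $p$ is odd, so the residue vanishes in the $\bF_p$-algebra $A_0$; for $p=2$ the simplest such coefficient is a unit, which is the source of the failure noted in the introduction. The corresponding statement for $\Phi_m$ then follows by base change along $W_{m+1}(R_0)\to R_m$.

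For the isomorphism, injectivity is elementary: if $\sum_i p^i\tilde z_{i+1}^{p^{m-i}}=0$ in $A_m$, then reducing mod $p$ gives $z_1^{p^m}=0$, so $z_1=0$ because $Z_0$ is reduced (smoothness), whence $p\,\phi_{m-1}(z_2,\dots,z_{m+1})=0$ forces $(z_2,\dots,z_{m+1})=0$ by the inductive hypothesis; injectivity of $\Phi_m$ follows as well. For surjectivity I would peel off Witt components one at a time. Given $w\in Z_m$, the key point is that $w\bmod p\in Z_0^{p^m}$: since $w$ is central in $A_m$ it may serve as its own lift, so $\{z,w\bmod p\}=\tfrac1p[\tilde z,w]\bmod p=0$ for every $z\in Z_0$, and non-degeneracy of $\{,\}$ forces $d(w\bmod p)=0$, i.e.\ $w\bmod p\in Z_0^{p}$; combined with the inductive hypothesis, which via $\phi_{m-1}$ shows that the image of $Z_{m-1}$ in $Z_0$ is $Z_0^{p^{m-1}}$ and so controls $w\bmod p^m$, one more application of the non-degeneracy argument to the top component upgrades this to $w\bmod p=z_1^{p^m}$ for a unique $z_1\in Z_0$. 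Then $\tilde z_1^{p^m}\in Z_m$ by Lemma \ref{lm1}(5) iterated, and $w-\tilde z_1^{p^m}\in pA_m\cap Z_m=p\,Z_{m-1}$, so $w-\tilde z_1^{p^m}=p\,w'$ with $w'\in Z_{m-1}$; applying the inductive hypothesis to $w'$ yields $w=\phi_m(z_1,z_2,\dots,z_{m+1})$, and the base change to $R_m$ is handled along the same lines.

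The step I expect to be the main obstacle is the last ``upgrade'', from $w\bmod p\in Z_0^{p}$ to $w\bmod p\in Z_0^{p^m}$, i.e.\ matching the Frobenius twists built into the $V$-adic filtration of $W_{m+1}(Z_0)$ against the $p$-adic filtration of $Z_m$: this is exactly the feature that distinguishes $W_{m+1}(Z_0)$ from the center of a degenerate deformation (where one would get $Z_0\otimes\bZ/p^{m+1}\bZ$), so it must use the symplectic structure in a genuinely non-formal way — most likely by passing, via a formal Darboux normal form, to a Weyl-algebra-type local model in which $Z_m$ can be computed by hand and then reassembled using the canonicity of $\phi_m$. A secondary difficulty is the combinatorial bookkeeping behind the residue computation in the homomorphism part, namely verifying that all the integer coefficients that occur are divisible by $p$ when $p$ is odd.
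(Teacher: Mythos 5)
Your reduction of the homomorphism property to a residue computation has the right flavor but is organized differently from the paper, and you leave the decisive step undone. The paper does not expand the universal Witt addition/multiplication polynomials directly; it first establishes an abstract criterion (Lemma \ref{lem}) for when a formula $\sum\pi^i\chi^{(m-i)}$ out of $W_{m+1}$ is a ring homomorphism, phrased in terms of the polynomials $\psi_i$, and then verifies its hypotheses for $\chi^{(i)}(z)=\tz^{p^i}$, $\pi(z)=pz$. The nontrivial hypothesis reduces to the identity (\ref{bnf}), and its proof rests on Proposition \ref{binom}, $(\tx+\ty)^p=\sum_i\binom pi\tx^i\ty^{p-i}$ for $\tx,\ty$ central modulo $p^{m+1}$. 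That proposition is not bookkeeping: it is proved via Poincar\'e--Birkhoff--Witt together with the Campbell--Hausdorff formula in the completed enveloping algebra of the rank-$3$ Heisenberg Lie algebra over $\bZ[1/2]$, which shows every noncommutative correction term comes with a factor of $p$ when $p\ne2$. Asserting that the error terms are Poisson brackets with coefficients such as $\tfrac{p(p-1)}2\binom{p-2}{i-1}$ is a guess at the outcome of a computation you have not done, and it is precisely this computation that the paper's conceptual argument replaces.

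The surjectivity argument has a genuine gap, located exactly where you flag it. Your Poisson-bracket step only shows $r(Z_m)\subset\mathrm{Im}(F_{Z_0/R_0})$: $\{z,r(w)\}=\tfrac1p[\tz,\tw]\bmod p$ vanishes already for $w\in Z_1$, so you get a single Frobenius. Combining with the inductive hypothesis $r(Z_{m-1})=\mathrm{Im}(F^{m-1})$ gives $r(w)\in\mathrm{Im}(F)\cap\mathrm{Im}(F^{m-1})=\mathrm{Im}(F^{m-1})$ (for $m\ge2$), which is no improvement, and ``one more application of the non-degeneracy argument'' is not an argument: the higher-order obstruction is the derivation $\Pi_w(x)=\tfrac1{p^{m+1}}[\tw,\tx]\bmod p$, and its kernel is not controlled by $\omega$ alone. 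The paper's proof of the key Proposition \ref{keylm} requires two ingredients you do not have: an explicit formula for the composite $S=\Pi\circ\Phi_m$ (Lemma \ref{lm7}, itself needing $[\tz^{p^i},\tx]\equiv p^i\tz^{p^i-1}[\tz,\tx]\bmod p^{i+2}$, again invoking $p\ne2$), and Lemma \ref{serre}, which feeds $\ker S$ through the iterated Cartier isomorphism to land in the image of the relative Frobenius. Your Darboux/local Weyl-model suggestion is a different, unexecuted strategy and is not what the paper does; the paper never normalizes $\omega$. Finally, your injectivity argument for $\phi_m$ is fine on Witt vectors, but you do not justify that it survives the base change $\otimes_{W_{m+1}(R_0)}R_m$ defining $\Phi_m$; the paper sidesteps this by deducing that $\Phi_{m+1}$ is an isomorphism in one stroke from the exact sequence $W_{m+1}\otimes R_{m+1}\xrightarrow{V\otimes\mathrm{id}}W_{m+2}\otimes R_{m+1}\to Z_0\otimes R_{m+1}\to0$ together with Proposition \ref{keylm} and the inductive isomorphism $p\Phi_m$.
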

The proof of this theorem occupies the rest of this section. 
\subsection{}\label{wittvectors}  We begin with some general remarks on Witt vectors.
It is well known (see, {\it e.g.}  \cite{m}, \S 26) and easy that the polynomials
\begin{equation}
\psi_i(x,y)=\frac{(x+y)^{p^{i-1}}-(x^p+y^p)^{p^{i-2}}}{p^{i-1}}, \quad i>1
\end{equation}
\begin{equation}
\psi_1(x,y)=x+y
\end{equation}
have integral coefficients and satisfy the recursive formula, 
\begin{equation}\label{rec.f}
 \psi_{i+1}(x,y)= \sum_{j=1}^p \binom{p}{j} p^{j(i-1)-i} \psi_i(x,y)^j  (x^p+y^p)^{(p-j)p^{i-2}}
\end{equation}
for $i>1$. 
We claim that for every commutative ring $Z$ and every $x,y\in Z$, one has the following equation in $W_n(Z)$,
\begin{equation}\label{add}
\underline{x+y}=\sum_{i=0}^{n-1}V^i\psi_{i+1}(\underline{x},\underline{y}).
\end{equation}
Here we write $ \underline{x}=(x,0,\ldots,0)$ for the Teichm\"uller representative of $x$ in $W_m(Z)$ and $V$ for the Verschiebung operator $W_{m}(Z)\to W_{m+1}(Z)$.
Indeed, it suffices to check this identity for $Z=\bZ[x, y]$. In this case the ghost map $W_n(Z) \to Z^n$ given by the Witt polynomials $\cW_m$ is an injective homomorphism. Thus, it is enough to check that ghost coordinates of both sides of  (\ref{add}) are equal. We have,
\[
\cW_m(\underline{x+y})=(x+y)^{p^{m-1}}= \sum_{i=0}^{m-1}p^i\psi_{i+1}(x^{p^{m-i-1}},y^{p^{m-i-1}})=
\]
\[
\sum_{i=0}^{m-1}p^i\psi_{i+1}(\cW_{m-i}(\underline x),  \cW_{m-i}(\underline y))= \cW_m\left(\sum_{i=0}^{n-1}V^i\psi_{i+1}(\underline{x},\underline{y})\right),
\]
where we used that $\cW_i \circ V = p \cW_{i-1}$. This proves (\ref{add}).

We are interested in describing ring homomorphisms from $W_n(Z)$ to a given ring.
\begin{lm}\label{lem}
Let $Z_0,Z_1,\ldots,Z_{n-1}$ be commutative rings. Suppose we are given two families of maps $\chi^{(i)}:Z_0\to Z_{i}$ and $\pi:Z_i\to Z_{i+1}$, $i=0,\ldots,n-1$, such that the following conditions hold:
\begin{enumerate}[(a)]
\item $\chi^{(i)}$ is multiplicative and $\pi$ is additive,
\item For any $x,y\in Z_0$, $p\pi(xy)=\pi(x)\pi(y)$ and if $0\le i \le  m\le n-1$ then 
\[
\chi^{(m)}(x)\pi^i\chi^{(m-i)}(y)=\pi^i\chi^{(m-i)}(x^{p^i}y)
\]
\item For any $x,y\in Z_0$ and $0\le m \le n-1$,
\[
\chi^{(m)}(x+y)=\sum_{i=0}^{m}\pi^{i}\psi_{i+1}(\chi^{(m-i)}(x),\chi^{(m-i)}(y)))
\]
\end{enumerate}
Then the maps $\varphi_m:W_{m+1}(Z_0)\to Z_{m}$ defined by
\begin{equation}\label{defpsi}
 \varphi_m(z_1,\ldots,z_{m+1})=\sum_{i=0}^{m}\pi^{i}\chi^{(m-i)}(z_{i+1})
\end{equation}
are ring homomorphisms and
\begin{equation}\label{vpi} 
\pi\varphi_{m-1}=\varphi_mV.
\end{equation}
\end{lm}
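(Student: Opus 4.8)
The plan is to verify directly that the maps $\varphi_m$ defined by (\ref{defpsi}) respect both addition and multiplication, using the combinatorial identities in hypotheses (a)--(c) together with the additivity formula (\ref{add}) for Witt vectors. Since every element of $W_{m+1}(Z_0)$ can be written as $\sum_{i=0}^m V^i \underline{z_{i+1}}$, and $\varphi_m$ sends $V^i\underline{z}$ to $\pi^i\chi^{(m-i)}(z)$, it is natural to reduce the ring-homomorphism property to statements about Teichm\"uller representatives and then propagate through the Verschiebung filtration. So the first step is to record the compatibility $\varphi_m(V^i\underline{z}) = \pi^i\chi^{(m-i)}(z)$, which is immediate from the definition, and to note that $\varphi_m(\underline{1}) = \chi^{(m)}(1) = 1$ by multiplicativity of $\chi^{(m)}$.

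For additivity, I would first treat the case of two Teichm\"uller representatives: $\varphi_m(\underline{x} + \underline{y})$. Expanding $\underline{x}+\underline{y} = \sum_{i=0}^m V^i\psi_{i+1}(\underline{x},\underline{y})$ via (\ref{add}) and applying $\varphi_m$ termwise, one gets $\sum_{i=0}^m \pi^i \chi^{(m-i)}(\psi_{i+1}(x,y))$, which is exactly $\chi^{(m)}(x+y)$ by hypothesis (c); and $\chi^{(m)}(x+y) = \varphi_m(\underline{x+y})$. This handles the "lowest" Witt component; the general case $\varphi_m(a+b)$ for $a = \sum V^i\underline{x_{i+1}}$, $b = \sum V^i\underline{y_{i+1}}$ then follows by an induction on $m$ using (\ref{vpi}). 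The identity (\ref{vpi}) itself, $\pi\varphi_{m-1} = \varphi_m V$, is a direct computation: $\varphi_m V(z_1,\dots,z_m) = \varphi_m(0,z_1,\dots,z_m) = \sum_{i=1}^m \pi^i\chi^{(m-i)}(z_i) = \pi\big(\sum_{j=0}^{m-1}\pi^j\chi^{(m-1-j)}(z_{j+1})\big) = \pi\varphi_{m-1}(z_1,\dots,z_m)$, where additivity of $\pi$ is used to pull it out of the sum. Granting (\ref{vpi}) and the Teichm\"uller case, additivity in general is bookkeeping: write $a = \underline{x_1} + V(a')$ and $b = \underline{y_1} + V(b')$, expand $a+b$ using that $V$ is additive and the universal addition formula, and compare $\varphi_m$ of both sides using the inductive hypothesis applied through $\pi\varphi_{m-1} = \varphi_m V$.

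For multiplicativity, the key structural fact about Witt multiplication is $V^i\underline{x}\cdot V^j\underline{y} = p^{\min(i,j)} V^{i+j}\underline{x^{p^{j-i}\cdot[i\le j]}\cdots}$ — more precisely $V^i(a)\cdot V^j(b) = V^{i+j}(F^j(a) F^i(b))$ in the Witt ring, and on Teichm\"uller representatives $F^j\underline{x} = \underline{x^{p^j}}$, so $V^i\underline{x}\cdot V^j\underline{y} = p^? V^{i+j}(\dots)$ — let me instead use the cleaner relation $V^i\underline x \cdot V^j \underline y = p^{\min(i,j)}\, V^{\max(i,j)}\!\big(\underline{x}^{p^{(j-i)_+}}\underline{y}^{p^{(i-j)_+}}\big)$ valid because $VF = p$. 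Applying $\varphi_m$ and matching against hypothesis (b) — which encodes precisely $\chi^{(m)}(x)\pi^i\chi^{(m-i)}(y) = \pi^i\chi^{(m-i)}(x^{p^i}y)$ and $p\pi(xy) = \pi(x)\pi(y)$ — should show $\varphi_m(V^i\underline x)\varphi_m(V^j\underline y) = \varphi_m(V^i\underline x\cdot V^j\underline y)$. Then bilinearity over the already-established additivity extends this to all of $W_{m+1}(Z_0)$. I expect the main obstacle to be the multiplicative step: reconciling the $p^{\min(i,j)}$ factors and the asymmetry in the Frobenius twists with the two separate identities packaged in (b), and making sure the induction on $m$ interacts correctly with both addition and multiplication simultaneously (one wants to know additivity in degree $m$ before checking multiplicativity, or carry both through a joint induction). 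Once the Teichm\"uller-times-Teichm\"uller case is nailed, distributing over sums is routine given additivity; so the real content is organizing the induction so that (\ref{vpi}) can be invoked at each stage, and carefully tracking the Verschiebung/Frobenius identities $\mathcal{W}_i\circ V = p\mathcal{W}_{i-1}$ and $F\underline{x} = \underline{x^p}$ through the computation.
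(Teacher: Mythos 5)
Your overall strategy --- establish $\pi\varphi_{m-1} = \varphi_m V$, then induct on $m$, reducing additivity to Teichm\"uller representatives and multiplicativity to products $V^i\underline{z}\cdot V^j\underline{z'}$ --- is exactly the paper's, and the multiplicative half is on track once the $V$--$F$ bookkeeping is cleaned up ($V^i\underline{z}\cdot V^j\underline{z'} = p^i V^i(\underline{z}\cdot V^{j-i}\underline{z'})$ for $i\le j$, with $p^i\pi^i(xy)=\pi^i(x)\pi^i(y)$, iterated from (b), absorbing the factor, and the $i=0$ case handled by the other half of (b)). But the Teichm\"uller additivity step, which carries the real weight of the proof, has concrete errors as written. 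First, (\ref{add}) expresses $\underline{x+y}$, not $\underline{x}+\underline{y}$, as $\sum_i V^i\psi_{i+1}(\underline{x},\underline{y})$; since $\psi_1(\underline{x},\underline{y})=\underline{x}+\underline{y}$, you must move the $i=0$ term across to get $\underline{x}+\underline{y}=\underline{x+y}-\sum_{i\ge 1}V^i\psi_{i+1}(\underline{x},\underline{y})$, so that the residual terms all lie in $V\cdot W_m(Z_0)$. Second, ``applying $\varphi_m$ termwise'' to that sum is precisely what you do not yet know you can do; it rests on the preliminary lemma $\varphi_m(w+Vw')=\varphi_m(w)+\varphi_m(Vw')$ for arbitrary $w\in W_{m+1}(Z_0)$ and $w'\in W_m(Z_0)$, which requires its own short computation using (\ref{vpi}) and the inductive hypothesis. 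Third, and most seriously, what one actually obtains per term is $\pi^i\varphi_{m-i}\bigl(\psi_{i+1}(\underline{x},\underline{y})\bigr)$, and passing to $\pi^i\psi_{i+1}(\chi^{(m-i)}(x),\chi^{(m-i)}(y))$ --- the shape that hypothesis (c) supplies --- uses the inductive hypothesis that $\varphi_{m-i}$, $i\ge 1$, is already a ring homomorphism. Your written expression $\pi^i\chi^{(m-i)}\bigl(\psi_{i+1}(x,y)\bigr)$ is not the same thing, because $\chi^{(m-i)}$ is only assumed multiplicative, not additive; as stated it makes the Teichm\"uller case look like a tautological reading of (c) when in fact the induction is doing essential work there. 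With those three points repaired, the argument becomes the paper's proof.
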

\begin{proof}
Formula (\ref{vpi}) is clear. We prove that $\varphi_m$ is a ring homomorphism using induction on $m$.
We have that $\varphi_0(z)=\chi^{(0)}(z)$. By assumption $\chi^{(0)}$ is multiplicative and using property (c) it follows that $\chi^{(0)}$ is additive. Hence $\varphi_0$ is a ring homomorphism. Now suppose $\varphi_l$ is a ring homomorphism for $l<m$. Let $x=(x_1,\ldots,x_{m+1})\in W_{m+1}(Z_0)$,  $x'=(x_2,\ldots,x_{m+1})\in W_{m}(Z_0)$,  and let $w$ be another element of $W_{m}(Z_0)$. Then, using the induction assumption and (\ref{vpi}), we have 
\begin{align*}
\varphi_m(x+Vw)&=\varphi_m(\underline{x_1}+V(x'+w))\\
&=\varphi_m(\underline{x_1})+\varphi_m(V(x'+w))\\
&=\varphi_m(\underline{x_1})+\pi\varphi_{m-1}(x'+w)\\
&=\varphi_m(\underline{x_1})+\varphi_{m}(Vx')+\varphi_m(Vw)\\
&=\varphi_m(x)+\varphi(Vw)
\end{align*}
Thus, for any $x\in W_{m+1}(Z_0)$ and $w \in W_{m}(Z_0)$, we have 
\begin{equation}\label{vaddprop}
\varphi_m(x+Vw)=\varphi_m(x)+\varphi_m(Vw)
\end{equation}
This implies that it suffices to check additivity of $\varphi_m$ on Witt vectors of the form $\underline{z}$. Adjusting equation (\ref{add}) we have 
\begin{equation}
\underline{z}+\underline{z'}=\underline{z+z'}-\sum_{i=1}^{m}V^i\psi_{i+1}(\underline{z},\underline{z'})
\end{equation}
Therefore, using (\ref{vpi}) as well as  (\ref{vaddprop}) and induction we see
\begin{align*}
\varphi_m(\underline{z}+\underline{z'})&=\varphi_m\left(\underline{z+z'}-\sum_{i=1}^{m}V^i\psi_{i+1}(\underline{z},\underline{z'})\right)\\
&=\varphi_m(\underline{z+z'})-\varphi_m\left(\sum_{i=1}^{m}V^i\psi_{i+1}(\underline{z},\underline{z'})\right)\\
&=\varphi_m(\underline{z+z'})-\sum_{i=1}^{m}\pi^i\psi_{i+1}(\varphi_{m-i}(\underline{z}),\varphi_{m-i}(\underline{z'}))\\
&=\varphi_m(\underline{z+z'})-\sum_{i=1}^{m}\pi^i\psi_{i+1}(\chi^{(m-i)}(\underline{z}),\chi^{(m-i)}(\underline{z'}))\\
&=\chi^{(m)}(z+z')-\sum_{i=1}^{m}\pi^i\psi_{i+1}(\chi^{(m-i)}(\underline{z}),\chi^{(m-i)}(\underline{z'}))
\end{align*}
Hence, by property (c) it follows that $\varphi_m(\underline{z}+\underline{z'})=\chi^{(m)}(z)+\chi^{(m)}(z')$, which implies $\varphi_m$ is additive. 

Since $\varphi_m$ is additive 
it suffices to check multiplicativity on Witt vectors of the form $V^i\underline{z}$. We have $V^i\underline{z}\cdot V^j\underline{z'}=p^iV^i(\underline{z}\cdot V^{j-i}\underline{z'})$. Notice that $p\pi(xy)=\pi(x)\pi(y)$ implies that $p^i\pi^i(xy)=\pi^i(x)\pi^i(y)$. If $i\ne 0$ then using this fact along with the facts $\varphi_m$ is additive and $\varphi_mV=\pi\varphi_{m-1}$ by induction it follows that
\begin{align*}
\varphi_m(V^i\underline{z}V^j\underline{z'})&=\varphi_m(p^iV^i(\underline{z}\cdot V^{j-i}\underline{z'}))\\
&=p^i\pi^i(\varphi_{m-i}(\underline{z})\cdot \varphi_{m-i}(V^{j-i}\underline{z}'))\\
&=\pi^i\varphi_{m-i}(\underline{z})\cdot \pi^i\varphi_{m-i}(V^{j-i}\underline{z'})\\
&=\varphi_m(V^i\underline{z})\cdot \varphi_m(V^j\underline{z'})
\end{align*}
If $i=0$ then we have $\underline{z}\cdot V^j(\underline{z'})=V^j(\underline{z^{p^j}z'})$ and using property (b) it follows that
\begin{align*}
\varphi_m(\underline{z}\cdot V^j\underline{z'})&=\varphi_m(V^j(\underline{z^{p^j}z'}))\\
&=\pi^j\varphi_{m-j}(\underline{z^{p^j}z'})\\
&=\pi^j\chi^{(m-j)}(z^{p^j}z')\\
&=\chi^{(m)}(z)\cdot \pi^j\chi^{(m-j)}(z')\\
&=\varphi_m(\underline{z})\cdot \varphi_{m}(V^j\underline{z'})
\end{align*}
\end{proof}

\subsection{} To show that the map $\phi_m$ in Theorem \ref{p} is a ring homomorphism we check that for $Z_i=Z(A_i)$ the maps $\chi^{(i)}:Z_0\to Z_{i}$ and $\pi:Z_i\to Z_{i+1}$ defined by $\chi^{(i)}(z)=\tz^{p^i}$ and $\pi(z)=pz$ satisfy the conditions of Lemma \ref{lem}. The only assertions that deserve proof
are the multiplicativity of  $\chi^{(m)}$ and property  (c) which is implied by the following identity:
\begin{equation}\label{bnf}
 p^i \psi_{i+1}(\tx^{p^j}, \ty^{p^j})=  (\tx^{p^j}+ \ty^{p^j})^{p^{i}} -  (\tx^{p^{j+1}}+ \ty^{p^{j+1}})^{p^{i-1}},
\end{equation}
for every $\tx,\ty \in A_m$ that are central modulo $p$ and every $i,j$ with $i+j=m$. 

Suppose elements $\tx,\ty \in A_{m+1}$  are central modulo $p^{m+1}$ {\it i.e.}, their reductions in $A_m$ lie in $Z_m$, then using the fact that $[\tx,\ty]\in Z_{m+1}$, we have
\[
(\tx\ty)^p=\tx^p\ty^p-\binom{p}{2}x^{p-1}y^{p-1}[\tx,\ty].
\]
As $\binom{p}{2}$ is divisible by $p$ (here we use that $p\ne 2$) and $[\tx,\ty]\equiv 0 \mod p^{m+1}$ it follows that
\begin{equation}\label{chimult}
(\tx\ty)^p=\tx^p\ty^p.
\end{equation}
The multiplicativity of  $\chi^{(m)}$ is derived from (\ref{chimult}) by induction. 

To prove (\ref{bnf}) we need the following.
\begin{pr}\label{binom}
Suppose $p\ne 2$ and $\tx,\ty\in A_{m+1}$ are central modulo $p^{m+1}$. Then the following equation holds 
\[
(\tx+\ty)^p=\sum_{i=0}^{p} \binom{p}{i} \tx^{i}\ty^{p-i}.
\]
\end{pr}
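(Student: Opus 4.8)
The plan is to extract a single ring-theoretic identity, valid in any ring in which the commutator $[u,v]$ is central with square zero, and then to observe that inside $A_{m+1}$ the error term of that identity is forced to vanish; the hypothesis $p\ne 2$ enters only at this last step.

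First I would record the relevant facts about $c:=[\tx,\ty]$. Since $\tx,\ty$ are central modulo $p^{m+1}$, Lemma \ref{lm1}(1) gives $c\in p^{m+1}A_{m+1}$, and (as already observed just before the Proposition) $c\in Z_{m+1}$, i.e.\ $c$ is central in $A_{m+1}$. Because $A_{m+1}$ is flat over $\bZ/p^{m+2}\bZ$ we have $p^{m+2}=0$ in $A_{m+1}$, whence $pc=0$ and $c^2=0$.

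Next I would prove that in \emph{any} associative ring $S$, if $u,v\in S$ are such that $c:=uv-vu$ is central and $c^2=0$, then for every $n\ge 0$
\begin{equation}\label{heiseq}
(u+v)^n=\sum_{i=0}^n\binom{n}{i}u^iv^{n-i}-\binom{n}{2}\,c\,(u+v)^{n-2}.
\end{equation}
I would argue by induction on $n$ (the cases $n\le 1$ being trivial). For the step, multiply the identity for $n-1$ on the left by $u+v$; using $vu^i=u^iv-ic\,u^{i-1}$ (valid because $c$ is central) and Pascal's relation one rewrites $(u+v)\sum_i\binom{n-1}{i}u^iv^{n-1-i}$ as $\sum_i\binom{n}{i}u^iv^{n-i}-(n-1)\,c\,(u+v)^{n-2}$, the replacement of $\sum_j\binom{n-2}{j}u^jv^{n-2-j}$ by $(u+v)^{n-2}$ inside a term already bearing the factor $c$ being legitimate since the two differ by an element of the two-sided ideal $(c)$ and $c\cdot(c)=(c^2)=0$; combined with the contribution $-\binom{n-1}{2}\,c\,(u+v)^{n-2}$ of the remaining summand and the relation $(n-1)+\binom{n-1}{2}=\binom{n}{2}$, this closes the induction. (Alternatively one may expand $(u+v)^n$ over the $2^n$ length-$n$ words in $u,v$ and bubble-sort each: a word with $i$ letters $u$ reaches $u^iv^{n-i}$ via $\mathrm{inv}(w)$ adjacent transpositions, each producing a single factor $-c$ because $c^2=0$, and $\sum_w\mathrm{inv}(w)=\binom{n}{2}\binom{n-2}{i-1}$ over all such words by the reversal involution $w\mapsto\bar w$, for which $\mathrm{inv}(w)+\mathrm{inv}(\bar w)=i(n-i)$.)

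Finally I would apply \eqref{heiseq} with $S=A_{m+1}$, $u=\tx$, $v=\ty$, $n=p$, whose hypotheses are exactly what the second step supplied. Since $p\ne 2$, $\binom{p}{2}=p\cdot\tfrac{p-1}{2}\in p\bZ$, so $\binom{p}{2}\,c\in pc\,A_{m+1}=0$ and \eqref{heiseq} collapses to $(\tx+\ty)^p=\sum_{i=0}^p\binom{p}{i}\tx^i\ty^{p-i}$, as claimed. The sole load-bearing point is this divisibility of $\binom{p}{2}$ by $p$ — precisely what breaks down when $p=2$, where $\binom{2}{2}=1$ and already $(\tx+\ty)^2=\tx^2+2\tx\ty+\ty^2-c$; the remainder is routine bookkeeping, the only mild care needed being to work consistently modulo the square-zero ideal $(c)$.
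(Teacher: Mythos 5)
Your proof is correct, and it takes a genuinely different and more elementary route than the paper's. The paper establishes a universal identity in the enveloping algebra $U_L$ of the free Heisenberg Lie algebra over $\bZ[1/2]$: it invokes Poincar\'e--Birkhoff--Witt for the flatness of $U_L$ over $\bZ[1/2]$, the Campbell--Hausdorff formula to get $e^X e^Y = e^{X+Y+[X,Y]/2}$ in a completion, and then extracts the degree-$p$ piece, noting that every correction coefficient beyond $(X+Y)^p$ is divisible by $p$ in $\bZ[1/2]$. The payoff of that route is a closed identity in $U_L$ that makes no assumption on powers of the commutator. Your route instead observes that for the purposes at hand one only ever works modulo $(c^2)$, since $c = [\tx,\ty] \in p^{m+1}A_{m+1}$ and $p^{2(m+1)} = 0$ in $A_{m+1}$; you then prove the clean ring identity $(u+v)^n = \sum_i \binom{n}{i} u^i v^{n-i} - \binom{n}{2}c\,(u+v)^{n-2}$ under the hypotheses ``$c$ central, $c^2 = 0$'' by a direct induction using $vu^i = u^i v - i c u^{i-1}$ and Pascal's rule, with the harmless replacement of $\sum_j \binom{n-2}{j}u^j v^{n-2-j}$ by $(u+v)^{n-2}$ inside a term already carrying a factor $c$. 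This avoids Lie theory entirely, needs no completion or flatness argument, and isolates the exact leading correction term $-\binom{n}{2}c(u+v)^{n-2}$, from which the dichotomy $p \neq 2$ versus $p = 2$ is transparent: $\binom{p}{2}$ is divisible by $p$ exactly when $p$ is odd, and then $\binom{p}{2}c = 0$ because $pc = 0$. Both proofs are valid; yours is shorter and more self-contained, while the paper's yields a genuine identity in a free object without any nilpotency hypothesis on $c$.
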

\begin{proof}
Let $L$ be the Lie algebra over $\bZ[1/2]\subset \bQ$ which is free as a $\bZ[1/2]$-module with basis $X,Y$ and $[X,Y]$ such that $[X,Y]$ is in the center of $L$. Denote the universal enveloping algebra of $L$ by $U_L$.  By Poincar\'e-Birkhoff-Witt theorem  (\cite{b} \S 1.2.7)
$U_L$ is a free $\bZ[1/2]$-module.
Now let $L_{\bQ}=L\otimes_{\bZ[1/2]}\bQ$, and let $U_{L_{\bQ}}$ be the universal enveloping algebra of $L_{\bQ}$.  The algebra $U_{L_{\bQ}}$ has a unique grading such that $deg\, X= deg\, Y =1$. Let $U_{L_{\bQ}}^i$ be the $i^{th}$ graded piece and let 
\[
\widehat{U_{L_{\bQ}}}=\prod_{i=0}^{\infty}U_{L_{\bQ}}^i
\]
be the completion of $U_{L_{\bQ}}$. Since $[X,Y]$ is in the center of $L_{\bQ}$ by the Campbell-Hausdorff formula (e.g. \cite{b} \S 2.6.4) the following equation holds in $\widehat{U_{L_{\bQ}}}$:
\[
e^{X}e^{Y}=e^{X+Y+[X,Y]/2}
\]
By equating $p^{th}$ graded terms of the above equation, we have 
\[
\sum_{i=0}^{p}\frac{X^i}{i!}\frac{Y^{p-i}}{(p-i)!}=\frac{(X+Y)^p}{p!}+(p-1)\frac{(X+Y)^{p-2}[X,Y]}{2\cdot (p-1)!}+\cdots
\] 
By multiplying both side by $p!$, we have in $U_{L_{\bQ}}$
\begin{equation}\label{pbw}
\sum_{i=0}^{p}\binom{p}{i}X^iY^{p-i}=(X+Y)^p+p(p-1)\frac{(X+Y)^{p-2}[X,Y]}{2}+\cdots
\end{equation}
The above equation has coefficients in $\bZ[1/2]$ and so, since $U_L$ is flat over $\bZ[1/2]$, holds in $U_L$.  We remark that all the coefficients in  (\ref{pbw}) except for the first one are divisible by $p$ in $\bZ[1/2]$. 

Now consider the homomorphism $U_L\to A_{m+1}$ that takes $X$ and $Y$ to $\tx$ and $\ty$ respectfully. As $[\tx,\ty]\equiv 0 \mod p^{m+1}$ the Proposition follows from (\ref{pbw}).
 \end{proof}
 When $i=1$ formula (\ref{bnf}) follows directly for the Proposition. For $i>1$ we prove (\ref{bnf}) by induction on $i$.  We have
 $$
 (\tx^{p^j}+ \ty^{p^j})^{p^{i}}= ((\tx^{p^{j+1}}+ \ty^{p^{j+1}})^{p^{i-2}} +  p^{i-1} \psi_{i}(\tx^{p^j}, \ty^{p^j}))^p$$
 $$= (\tx^{p^{j+1}}+ \ty^{p^{j+1}})^{p^{i-1}} +  p^i \sum_{l=1}^p \binom{p}{l} p^{l(i-1)-i} \psi_i(\tx^{p^j},\ty^{p^j})^l  (\tx^{p^{j+1}}+\ty^{p^{j+1}})^{(p-l)p^{i-2}}$$ 
and (\ref{bnf}) follows from recursive formula (\ref{rec.f}).
\subsection{}\label{isomorphism} It remains to prove that if the deformation $A_n$ is non-degenerate the homomorphism $\Phi_m: W_{m+1}(Z_0)\otimes _{W_{m+1}(R_0)} R_m \to Z_{m}$ is an isomorphism. When $m=0$, $\Phi_1$ is clearly an isomorphism. Now assume that $\Phi_l$ is an isomorphism for all $0\le l<m+1$. 
For a positive integer $i$ we denote by 
$$F^{i}_{Z_0/R_0}: Z_0^{(i)}=Z_0 \otimes_{F^{i}_{R_0}}R_0\to Z_0$$ 
the $i^{th}$ iterate of the relative Frobenius map.
 
In order to show that $\Phi_{m+1}$ is an isomorphism we need the following result.
\begin{pr}\label{keylm}
Let $r(Z_{m+1})\subset Z_0$ be the image of the reduction map, then $r(Z_{m+1})=Im(F^{m+1}_{Z_0/R_0})$.
\end{pr}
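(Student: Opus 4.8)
The claim is a statement about which elements of $Z_0$ lift to the center of $A_{m+1}$, and the natural strategy is to analyze the obstruction to lifting step by step through the tower $A_{m+1}\to A_m\to\cdots\to A_0$, using the non-degeneracy hypothesis to identify the obstruction with a coboundary condition. First I would prove the easy inclusion $\im(F^{m+1}_{Z_0/R_0})\subseteq r(Z_{m+1})$: if $z\in Z_0$ and $\tz\in A_{m+1}$ is any lift, then by part (5) of Lemma \ref{lm1} (applied iteratively, as in the Claim following \eqref{defin}) the element $\tz^{p^{m+1}}$ is central in $A_{m+1}$, and its reduction to $Z_0$ is $z^{p^{m+1}}=F^{m+1}_{Z_0/R_0}(z\otimes 1)$. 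Since $\im(F^{m+1}_{Z_0/R_0})$ is exactly the subring of $p^{m+1}$-th powers, this gives one direction; additivity of Frobenius in characteristic $p$ (here we use that $R_0$-algebra structure is via a ring of characteristic $p$) ensures the set of such reductions is closed under addition, so indeed all of $\im(F^{m+1})$ is hit.

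For the reverse inclusion $r(Z_{m+1})\subseteq \im(F^{m+1}_{Z_0/R_0})$, suppose $z\in Z_0$ lifts to some $w\in Z_{m+1}$. The plan is to show $z$ is a $p^{m+1}$-th power in $Z_0$ by an inductive descent: I would use the induction hypothesis of \S\ref{isomorphism} — that $\Phi_l$ is an isomorphism for $l\le m$ — to control $Z_m$, reducing the problem of lifting from $Z_m$ to $Z_{m+1}$ to a purely Poisson-geometric question on $\spec Z_0$. Concretely, the reduction $r(w)\in Z_m$ must itself be describable via $\Phi_m$, hence (after the isomorphism) in terms of Witt components, and the central lift $w$ of it existing in $A_{m+1}$ forces the ``top'' Witt datum to be trivial except through a Frobenius twist. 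The key local computation is that for $\tz\in A_{m+1}$ central modulo $p^m$, the commutator $[\tz,-]$ defines a derivation of $Z_0$ valued in $p^m Z_{m+1}/p^{m+1}\cong Z_0$, and by part (4) of Lemma \ref{lm1} together with non-degeneracy this derivation is the Hamiltonian vector field $\{dz',-\}$ paired against the bivector $\mu$; the condition that a genuine central lift exists is that this Hamiltonian field vanishes, which on a smooth variety with non-degenerate $\mu$ (so the only closed-under-Poisson functions with trivial Hamiltonian flow are... ) forces $dz=0$ in $\Omega^1_{Z_0/R_0}$, i.e. $z$ lies in the kernel of $d$, which on a smooth $\bF_p$-algebra is precisely $\im(F_{Z_0/R_0})$ — and then one iterates $m+1$ times.

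The main obstacle I anticipate is making the inductive descent precise: at the $(m+1)$-st stage one does not directly get $dz=0$ but rather a statement about $z$ modulo lower-order Frobenius twists, so the bookkeeping must be arranged so that after peeling off one power of Frobenius the remaining data lives in a smaller tower to which the induction hypothesis applies. In particular one needs that the non-degeneracy condition (which by the remark after \S\ref{def} depends only on $A_n \bmod p^2$) propagates correctly, and that the identification of $p^m Z_{m+1}$-valued commutators with $\mu$-Hamiltonian fields is compatible with this peeling — this is where I'd expect to invoke Lemma \ref{lm1}(3)–(4) most carefully, and where a subtlety about whether $\mu$ versus its Frobenius pullback $\mu^{(1)}$ is the relevant bivector on $Z_0^{(m+1)}$ could bite. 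Once the ``$dz=0$'' reduction is in hand, the classical fact that $\ker(d\colon Z_0\to \Omega^1_{Z_0/R_0})=\im(F_{Z_0/R_0})$ for smooth $Z_0$ over a perfect base of characteristic $p$ closes the argument, and a clean $p^{m+1}$-fold iteration yields the Proposition.
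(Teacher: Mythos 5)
Your outline starts from the same two building blocks as the paper: the easy containment $\im(F^{m+1}_{Z_0/R_0})\subseteq r(Z_{m+1})$ via Lemma~\ref{lm1}(5), and for the converse the map sending $y\in Z_m$ to the derivation $x\mapsto [\tilde y,\tilde x]/p^{m+1}\bmod p$, which via $\mu^{-1}$ becomes a $1$-form $\Pi(y)\in\Omega^1_{Z_0/R_0}$ that must vanish whenever $y$ lifts to $Z_{m+1}$. Up to this point you are on the paper's track, and you have also correctly flagged that this is where the induction hypothesis ($\Phi_l$ iso for $l\le m$) enters, since one needs $\Phi_m^{-1}$ to turn $Z_m$ into Witt-vector data.

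The gap is exactly where you say you anticipate trouble, and it is not merely bookkeeping. You assert that vanishing of the obstruction ``forces $dz=0$ in $\Omega^1_{Z_0/R_0}$,'' but that is not what the vanishing says. Writing $y=\Phi_m((z_1,\ldots,z_{m+1})\otimes a)$, the obstruction is (Lemma~\ref{lm7})
\[
S\bigl((z_1,\ldots,z_{m+1})\otimes a\bigr)=\overline a\sum_{i=0}^m z_{i+1}^{\,p^{m-i}-1}\,dz_{i+1},
\]
so $S=0$ gives a relation between \emph{all} the $z_i^{p^{m-i}-1}dz_i$, not $dz_1=0$. The element you actually need to control is $z_1\otimes\overline a\in Z_0^{(m)}$ (since $r(y)=F^m_{Z_0/R_0}(z_1\otimes\overline a)$), and the only term involving $z_1$ is $z_1^{p^m-1}dz_1$, which is $C^{-m}$ of $dz_1$, not $dz_1$ itself. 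Separating this leading Cartier term from the lower ones is the heart of the matter, and the paper does it by the iterated Cartier quotients in Lemma~\ref{serre}: one defines $D_1=\Omega^1_{Z_0/R_0}/d(Z_0)$, and inductively $D_{i+1}=D_i/C^{-i}(d(Z_0^{(i)}))$, together with injections $C^{-i}\colon\Omega^1_{Z_0^{(i)}/R_0}\hookrightarrow D_i$. In the quotient $D_m$ all terms with $i\ge 1$ die, $S=0$ becomes $C^{-m}(d(z_1\otimes\overline a))=0$, and by injectivity $d(z_1\otimes\overline a)=0$ in $\Omega^1_{Z_0^{(m)}/R_0}$, whence $z_1\otimes\overline a\in\im(F_{Z_0^{(m)}/R_0})$ and $r(y)\in\im(F^{m+1}_{Z_0/R_0})$. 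There is no separate ``iterate $m+1$ times'' step on the algebra side: the tower of Frobenii is absorbed entirely into the nested Cartier quotients, and the statement is proved in one shot. Your proposal as written would only close the case $m=0$ (where $S(y)=dy$ and one genuinely does get $dy=0$), and the mechanism you gesture at for higher $m$ --- peeling a Frobenius and invoking the induction hypothesis on a ``smaller tower'' --- is not how the paper (or, as far as I can see, any correct argument) proceeds, since the lower Witt components $z_2,\ldots,z_{m+1}$ pollute $S$ and must be killed cohomologically, not by descending $m$.
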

\begin{proof}
We maintain the assumption that $\Phi_l$ is an isomorphism for all $0\le l<m+1$. The containment $r(Z_{m+1})\supset Im(F^{m+1}_{Z_0/R_0})$ is clear  by Lemma \ref{lm1} (5). 

Let $y\in Z_{m}$, $x\in Z_0$,  and let $\tx, \ty \in A_n$  be liftings of $x$ and $y$ respectfully. Then the element $[\ty,\tx]/p^{m+1} \mod p\in Z_0$ is independent of the liftings.
Moreover, by Lemma \ref{lm1} (4) the map
\begin{equation}\label{eqpi1}
\Pi_y: Z_0\to Z_0, \quad  x\mapsto [\ty,\tx]/p^{m+1} \mod p\, 
\end{equation}
is a $R_0$-linear derivation, $\Pi_y \in T_{Z_0/R_0}$. Identifying $T_{Z_0/R_0}$ with $\Omega^1_{Z_0/R_0}$, we get a linear map
\begin{equation}\label{eqpi2}
\Pi:Z_{m}\to \Omega^1_{Z_0/R_0}, \quad y \mapsto  i_{\Pi_y} \mu^{-1}. 
\end{equation}
Note that if $y\in Z_m$ is the reduction $\mod p^{m+1}$ of some element $\ty\in Z_{m+1}$ then  $\Pi(y)=0$.
\begin{lm}\label{lm7} The image of an element $(z_1,\ldots,z_{m+1})\otimes a \in  W_{m+1}(Z_0)\otimes R_{m}$ under the composition
$$S=\Pi \circ \Phi_m : W_{m+1}(Z_0)\otimes R_{m} \iso Z_{m} \rar{}  \Omega^1_{Z_0/R_0}$$
is given by the formula
\begin{equation}\label{serremap}
S ((z_1,\ldots,z_{m+1})\otimes a)= \overline{a}\sum_{i=0}^{m}z_{i+1}^{p^{m-i}-1}dz_{i+1},
\end{equation}
where $ \overline{a}\in R_0$ is the reduction of  $a$ modulo $p$.
\end{lm}
\begin{proof}
We start with the following.
\begin{cl}
If $x\in Z_0$ and $z\in Z_i$ and $\tx,\tz\in A_n$ are any liftings, we have 
\begin{equation}\label{e9}
[\tz^p,\tx]\equiv p\tz^{p-1}[\tz,\tx]\mod p^{i+3}
\end{equation}
\end{cl}
Indeed,  
\begin{align*}
[\tz^p,\tx]&=\sum_{j=0}^{p-1}\tz^{p-j-1}[\tz,\tx]\tz^j\\
&=\sum_{j=0}^{p-1}\left(\tz^{p-1}[\tz,\tx]-\tz^{p-j-1}[\tz^j,[\tz,\tx]]\right)
\end{align*}
Since $z\in Z_i$, we have $[\tz, [\tz,\tx]] \mod p^{i+3}\in Z_{i+2}$. Thus $\tz^{p-j-1}[\tz^j,[\tz,\tx]]=j\tz^{p-2}[\tz, [\tz,\tx]] \mod p^{i+3}$ and 
\begin{align*}
\sum_{j=0}^{p-1}\left(\tz^{p-1}[\tz,\tx]-\tz^{p-j-1}[\tz^j,[\tz,\tx]]\right)&\equiv \sum_{j=0}^{p-1}\left(\tz^{p-1}[\tz,\tx]-j\tz^{p-2}[\tz,[\tz,\tx]]\right)\\
&\equiv p\tz^{p-1}[\tz,\tx]-\binom{p}{2}\tz^{p-2}[\tz,[\tz,\tx]] \mod p^{i+3}
\end{align*}
Since $p\ne 2$ we have $\binom{p}{2}\equiv 0\mod p$ and, thus, $\binom{p}{2}[\tz,[\tz,\tx]]\equiv 0\mod p^{i+3}$, which gives (\ref{e9}).

The Claim above implies, by induction, that for every $i\geq 0$, $x,z\in Z_0$, we have  
\[
[\tz^{p^i},\tx]\equiv p^i\tz^{p^i-1}[\tz,\tx]\mod p^{i+2}.
\] 
Thus, we conclude
\[ 
[\sum_{i=0}^{m}p^i\tz_{i+1}^{p^{m-i}}, \tx ] \equiv p^{m+1}\sum_{i=0}^{m}z_{i+1}^{p^{m-i}-1}[\tz_{i+1}, \tx] \mod p^{m+2},
\]
which implies the desired result.
\end{proof}
The following result will also be used in the next section.
\begin{lm}\label{serre} Let $p$ be a prime number (not necessarily odd), $Z_0$ a smooth $R_0$-algebra, and let 
$S : W_{m+1}(Z_0)\otimes R_{m}  \rar{}  \Omega^1_{Z_0/R_0}$ be the morphism (the ``Serre morphism'') defined by formula (\ref{serremap}).
 If $z\in ker\, S$ then the image of $z$ under the map 
$$\alpha: W_{m+1}(Z_0)\otimes R_{m} \to Z_0^{(m)}, \quad (z_1,\ldots,z_{m+1})\otimes a\mapsto z_1\otimes \overline{a}$$
is contained in the image of the relative Frobenius map $F_{Z_0^{(m)}/R_0}: Z_0^{(m+1)} \to Z_0^{(m)}$.
\[
\xymatrix{
ker\, S \ar@{^{(}->}[r]\ar[d]_{} & W_{m+1}(Z_0)\otimes R_{m}  \ar[d]_{\alpha}\\
 Z_0^{(m+1)}    \ar@{^{(}->}[r]_{F_{Z_0^{(m)}/R_0}  }&  Z_0^{(m)} \\
}
\]
\end{lm}
\begin{proof}
Recall (see, {\it e.g.} \cite{i}) that for every smooth $R_0$-algebra $Z_0$ we have the Cartier isomorphism
$$C^{-1}:\Omega^1_{Z_0^{(1)}/R_0}= \Omega^1_{Z_0/R_0} \otimes_{F_{R_0}} R_0  \iso H^1(\Omega^{\cdot}_{Z_0/R_0})\subset \Omega^{1}_{Z_0/R_0}/d(Z_0)$$
                         $$ xdy \otimes \overline{a} \mapsto  \overline{a} x^py^{p-1}dy, \quad xdy\in \Omega^1_{Z_0/R_0}, \, \overline{a}\in R_0. $$
More generally, for each positive integer $i$ we shall define a $R_0$-module $D_i$ together with a $R_0$-linear map
$$C^{-i}:\Omega^1_{Z_0^{(i)}/R_0}\to D_i.$$
$D_1$ is just the quotient of $\Omega^{1}_{Z_0/R_0}$ by the subspace $d(Z_0)$ of exact forms. Assuming that $D_i$ and $C^{-i}$ are already defined
we define $D_{i+1}$ to be the quotient of $D_i$ by   $C^{-i}(d(Z_0^{(i)}))$ and $C^{-i-1}$ to be the composition
$$\Omega^1_{Z_0^{(i+1)}/R_0} \rar{C^{-1}} \Omega^1_{Z_0^{(i)}/R_0}/d(Z_0^{(i)})\rar{C^{-i}}  D_{i+1}.$$
  As $C^{-1}$ is injective $C^{-i}$ is injective as well. By construction $D_i$ is a quotient of $ \Omega^{1}_{Z_0/R_0}$; we denote by $\beta:  \Omega^{1}_{Z_0/R_0}\to D_i$ the projection.
  We have the commutative diagram:
\[
\xymatrix{
 W_{m+1}(Z_0)\otimes R_{m} \ar[r]^-{S}\ar[d]_{\alpha} & \Omega_{Z_0/R_0}^1\ar[d]_{\beta}\\
 Z_0^{(m)} \ar[r]_{C^{-m}d}&  D_m\\
}
\]
If $z\in ker\, S$ then  $C^{-m}d(\alpha(z))=0$ and, thus, $d(\alpha(z))=0$. Therefore $\alpha(z)$ lies in the image of the relative Frobenius map.
  \end{proof}
 Now we can finish the proof of Proposition \ref{keylm}. As we have observed above if $y \in Z_m$ is the reduction $\mod p^{m+1}$ of an element $\ty\in Z_{m+1}$ then  $\Pi(y)=0$.
Consider the following commutative diagram
  \begin{equation}\label{diagram}
\xymatrix{
Z_{m+1}\ar[d]\ar[r] & Z_{m}\ar[d]_{\Phi_m^{-1}} \ar[r]^{r} & Z_0 & &\\
ker \, S \ar@{^{(}->}[r] & R_m\otimes W_{m+1}(Z_0) \ar[r]^-{\alpha} & Z_0^{(m)} \ar[u]_{F^m_{Z_0/R_0}}& &Z_0^{(m+1)} \ar@{_{(}->}[ll]_{F_{Z_0^{(m)}/R_0}}
}
\end{equation}
 By Lemma \ref{serre} the map $\alpha:  ker \, S \rar{\alpha} Z_0^{(m)}$ factors through $\xymatrix{Z_0^{(m+1)}\ar[rr]^{F_{Z_0^{(m)}/R_0}}&& Z_0^{(m)}}$. Thus the reduction map $r$ factors through 
 $\xymatrix{Z_0^{(m+1)}\ar[rr]^{F^{m+1}_{Z_0/R_0}}&& Z_0}$.
  \end{proof}
  
  Now we finish the proof that $\Phi_{m+1}$ is an isomorphism. Consider the following commutative diagram
\[
\xymatrix{
\cdots \ar[r] & W_{m+1}(Z_0)\otimes R_{m+1}\ar[r]^{V\otimes Id}\ar[d]_f & W_{m+2}(Z_0)\otimes R_{m+1} \ar[r]^-{g}\ar[d]_{\Phi_{m+1}} & Z_0\otimes R_{m+1} \ar[r]\ar[d]_{\overline{\Phi_{m+1}}}& 0\\
0\ar[r] & Z_{m+1}\cap pA_{m+1}\ar[r] & Z_{m+1} \ar[r]\ar[dr]_{r} & \frac{Z_{m+1}}{Z_{m+1}\cap pA_{m+1}}\ar[r]\ar@{^{(}->}[d] & 0\\
& & & Z_0
}
\]
Here $f: W_{m+1}(Z_0)\otimes R_{m+1}\simeq W_{m+1}(Z_0)\otimes R_{m}\to Z_{m+1}\cap pA_{m+1}$ equals $p\Phi_{m}$ which is an isomorphism by the induction assumption,
$g$ is induced by the homomorphism $W_{m+2}(Z_0) \to Z_0$ that takes a Witt vector to its first coordinate, and, finally, the morphism $\overline{\Phi_{m+1}}: Z_0\otimes R_{m+1}\simeq Z_0^{(m+1)} \to \frac{Z_{m+1}}{Z_{m+1}\cap pA_{m+1}} \subset Z_0 $ is equal to $F_{Z_0/R_0}^{m+1}$. By  Proposition   \ref{keylm} $\overline{\Phi_{m+1}}$ is an isomorphism. It follows that $\Phi_{m+1}$ is an isomorphism as well.

\section{Main result. Characteristic $2$ case.} 
\subsection{}\label{restr}
 Throughout this section $R_n$ is a commutative algebra flat over $\bZ/2^{n+1}\bZ$, $n>0$, and $A_n$ is a flat associative $R_n$-algebra. We will also assume that the deformation $A_n$ of $A_0$ is non-degenerate and denote by $\omega \in \Omega^2_{Z_0/R_0}$ the corresponding non-degenerate $2$-form.
 Though the map $W_{n+1}(Z_0)\to Z_{n}$  defined by equation (\ref{defin}) is neither additive nor multiplicative  we explain in this section that, if $\omega$ is exact,  formula  (\ref{defin}) can be corrected to yield an isomorphism of $R_n$-algebras, 
 $$W_{n+1}(Z_0)\otimes_{W_{n+1}(R_0)} R_n \iso Z_{n}.$$  
 Our construction depends on the choice of a primitive $\eta\in \Omega^1_{Z_0/R_0}$, $\omega=d\eta$. Define a map
 \begin{equation}\label{rst1}
Z_0 \to Z_0, \quad z \mapsto z^{[2]}
\end{equation}
by the formula
 \begin{equation}\label{rst2}
z^{[2]}= L_{t_z} i_{t_z} \eta - i_{t_z^{[2]}}\eta \in Z(A_0),
\end{equation}
 where $t_z\in T_{Z_0/R_0}$ is the Hamiltonian vector field corresponding to $z$  {\it i.e.}, $dz=i_{t_z}\omega$, $t_z^{[2]}\in T_{Z_0/R_0}$ is its square in the restricted Lie algebra of vector fields, and $L_{t_z}$ is the Lie derivative.
\begin{lm} For every $x,y \in Z_0$, we have
\begin{equation}\label{qadd}
(x+y)^{[2]} - x^{[2]}-y^{[2]}  =\{x,y\},
\end{equation}
\begin{equation}\label{qad}
\{x^{[2]}, y\}=\{x,\{x,y\}\},
\end{equation}
\begin{equation}\label{qmult}
(xy)^{[2]}=y^2x^{[2]}+x^2y^{[2]}+xy\{x,y\}.
\end{equation}
\end{lm}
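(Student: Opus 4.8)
The plan is to reduce all three identities to computations with Hamiltonian vector fields on $\Spec Z_0$, exploiting that $R_0$ has characteristic $2$ so that every sign is immaterial. For $z\in Z_0$ write $t_z\in T_{Z_0/R_0}$ for the Hamiltonian vector field, $dz=i_{t_z}\omega$; non-degeneracy of $\omega$ makes $z\mapsto t_z$ well defined and makes $v\mapsto i_v\omega$ injective. From $d(x+y)=dx+dy$, $d(xy)=x\,dy+y\,dx$ and $d\omega=0$ one reads off $t_{x+y}=t_x+t_y$, $t_{xy}=x\,t_y+y\,t_x$, $[t_x,t_y]=t_{\{x,y\}}$, and $\{x,y\}=i_{t_x}dy=\omega(t_y,t_x)$, while $\{x,x\}=\omega(t_x,t_x)=0$. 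I will also use the two characteristic $2$ restricted-structure formulas for derivations $(s+t)^{[2]}=s^{[2]}+t^{[2]}+[s,t]$ and $(fs)^{[2]}=f^2 s^{[2]}+f\,s(f)\,s$, both immediate from $D^{[2]}=D\circ D$. (The three identities (\ref{qadd})--(\ref{qmult}) are precisely the axioms making $Z_0$ a restricted Poisson algebra in the sense of Bezrukavnikov--Kaledin, so this also verifies that claim for the formula (\ref{rst2}).)

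The first step is to prove $t_{z^{[2]}}=t_z^{[2]}$, i.e. $dz^{[2]}=i_{t_z^{[2]}}\omega$. Applying $d$ to $z^{[2]}=L_{t_z}i_{t_z}\eta-i_{t_z^{[2]}}\eta$ and using $dL_t=L_td$, Cartan's formula $L_t=di_t+i_td$, the vanishing $i_{t_z}dz=\omega(t_z,t_z)=0$, and — the one nontrivial input — the identity $L_{t_z^{[2]}}=L_{t_z}^2$ on $\Omega^{\cdot}_{Z_0/R_0}$ (both sides are derivations of the de Rham complex commuting with $d$ and agreeing on $Z_0$, where $L_t$ acts as $t$), the two occurrences of $L_{t_z}^2\eta$ cancel and $dz^{[2]}=i_{t_z^{[2]}}\omega$ remains. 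Identity (\ref{qad}) follows formally: $\{x^{[2]},y\}=i_{t_{x^{[2]}}}dy=i_{t_x^{[2]}}dy=t_x^{[2]}(y)=t_x(t_x(y))=t_x(\{x,y\})=\{x,\{x,y\}\}$.

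For (\ref{qadd}) and (\ref{qmult}) I would study the map $F$ sending a vector field $t$ to $F(t):=L_t i_t\eta-i_{t^{[2]}}\eta=t(i_t\eta)-i_{t^{[2]}}\eta\in Z_0$, so that $z^{[2]}=F(t_z)$, and establish two polarization properties: $F(s+t)=F(s)+F(t)+\omega(s,t)$ and $F(fs)=f^2F(s)$ for $f\in Z_0$. Expanding $F(s+t)$ with $(s+t)^{[2]}=s^{[2]}+t^{[2]}+[s,t]$ produces the cross term $s(i_t\eta)+t(i_s\eta)-i_{[s,t]}\eta$, which in characteristic $2$ is exactly $(d\eta)(s,t)=\omega(s,t)$ by the invariant formula for the exterior derivative of a $1$-form. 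Expanding $F(fs)$ with $(fs)^{[2]}=f^2 s^{[2]}+f\,s(f)\,s$ and $i_{fs}\eta=f\,i_s\eta$, the two $f\,s(f)\,i_s\eta$ terms cancel and $f^2 F(s)$ remains.

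The conclusion is then immediate: $(x+y)^{[2]}=F(t_{x+y})=F(t_x+t_y)=F(t_x)+F(t_y)+\omega(t_x,t_y)=x^{[2]}+y^{[2]}+\{x,y\}$, which is (\ref{qadd}); and, using $Z_0$-bilinearity of $\omega$, $(xy)^{[2]}=F(t_{xy})=F(x\,t_y+y\,t_x)=F(x\,t_y)+F(y\,t_x)+\omega(x\,t_y,y\,t_x)=x^2 y^{[2]}+y^2 x^{[2]}+xy\,\omega(t_y,t_x)=x^2 y^{[2]}+y^2 x^{[2]}+xy\,\{x,y\}$, which is (\ref{qmult}). The routine content is the Cartan calculus; the one place demanding care — and the main obstacle — is the restricted-structure bookkeeping ($L_{t^{[2]}}=L_t^2$ on forms, $(fs)^{[2]}=f^2 s^{[2]}+f\,s(f)\,s$), together with holding the normalization of $t_z$, hence of $\{x,y\}=i_{t_x}dy$, fixed throughout. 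Everything else is forced by working in characteristic $2$, which is exactly what rescues the naive formula (\ref{rst2}).
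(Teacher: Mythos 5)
Your proof is correct and essentially identical to the paper's: you introduce the same quadratic map (your $F$ is the paper's $\cQ$), establish the same two polarization properties $F(s+t)=F(s)+F(t)+\omega(s,t)$ and $F(fs)=f^2F(s)$ using the same restricted-power identities for vector fields, and derive (\ref{qad}) exactly as the paper does, by first checking $t_{z^{[2]}}=t_z^{[2]}$ via $d z^{[2]}=i_{t_z^{[2]}}\omega$ and Cartan calculus.
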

\begin{proof}
Define a map $\cQ: T_{Z_0/R_0} \to Z_0$ by the formula,
\begin{equation}\label{qrefi}
\cQ(\theta)= L_{\theta} i_{\theta} \eta - i_{\theta^{[2]}}\eta , \quad \theta \in T_{Z_0/R_0}.
\end{equation}
Then, for every $\theta_1, \theta_2\in T_{Z_0/R_0}$, using the identity $(\theta_1+ \theta_2)^{[2]}= \theta_1^{[2]} + \theta_2^{[2]} +[\theta_1, \theta_2]$ and Cartan's formula,  we find
\begin{equation}\label{qform1}
  \cQ(\theta_1+ \theta_2)- \cQ(\theta_1)-\cQ(\theta_2) = L_{\theta_1} i_{\theta_2} \eta +  L_{\theta_2} i_{\theta_1} \eta - i_{[\theta_1, \theta_2]}\eta   =  i_{\theta_1} i_{\theta_2} \omega. 
  \end{equation}
Using $i_{t_x} i_{t_y} \omega= \{x,y\}$ equation (\ref{qadd}) follows. Next, for every $z\in Z_0$,  using the identity $(z \theta)^{[2]}= z^2\theta^{[2]}+z(L_\theta z) \theta$, one has
\begin{equation}\label{qform2}
\cQ(z\theta)= z L_{\theta} (z i_{\theta} \eta) - z^2 i_{\theta^{[2]}} \eta - z( L_{\theta} z)  i_{\theta}\eta= z^2 \cQ(\theta).
\end{equation}
Thus, we conclude 
$$(xy)^{[2]}= \cQ(xt_y +yt_x)= x^2\cQ(t_y)+y^2\cQ(t_x) + xy i_{t_x} i_{t_y} \omega,$$
which proves (\ref{qmult}).
Finally,  for (\ref{qad}) it suffices to check that $t_x^{[2]}=t_{x^{[2]}}$ or,  equivalently,  that $i_{t_x^{[2]}}\omega =  i_{t_{x^{[2]}}} \omega$  . We have,
\begin{equation}\label{eq7}
i_{t_{x^{[2]}}} \omega= dx^{[2]}= d(L_{t_x} i_{t_x} \eta - i_{t_x^{[2]}}\eta)= - L_{t_x} i_{t_x} \omega +L^2_{t_x}\eta +  i_{t_x^{[2]}}\omega - L_{t_x^{[2]}} \eta .
\end{equation}
Since $L_{t_x} i_{t_x} \omega = d\{x,x\}=0$ and $L^2_{t_x}\eta = L_{t_x^{[2]}} \eta$  the right-hand side of (\ref{eq7}) equals $ i_{t_x^{[2]}}\omega$ as required.
 
\end{proof} 

\begin{rem}
Equations (\ref{qform1}) and  (\ref{qform2}) show that the quadratic form $\cQ$ on the $Z_0$-module  $T_{Z_0/R_0}$ is a quadratic refinement of the symmetric form $\omega$. In fact, for every smooth $R_0$-algebra $Z_0$ in characteristic $2$ one can define a refined de Rham complex
$$(S^{\cdot}\Omega^1_{Z_0/R_0}, \td)= Z_0 \to \Omega^1_{Z_0/R_0} \to S^{2}\Omega^1_{Z_0/R_0} \to \cdots $$
to be the initial object in the category of commutative DG algebras $\cA$ over $R_0$ equipped with a homomorphism $Z_0\to \cA$. By the universal property the DG algebra  $(S^{\cdot}\Omega^1_{Z_0/R_0}, \td)$ maps to the de Rham DG algebra $(\bigwedge^{\cdot}\Omega^1_{Z_0/R_0}, \td)$.
The quadratic form $\cQ\in S^{2}\Omega^1_{Z_0/R_0}$ is identified with $\td \eta$.
\end{rem}
\begin{rem} In (\cite{bk}, Def. 1.8), Bezrukavnikov and Kaledin introduced the notion of restricted Poisson algebra in characteristic $p$. If $p=2$ a restricted Poisson algebra is just a Poisson algebra $Z_0$ over $R_0$  together with a map 
$Z_0 \to Z_0$, $z \mapsto z^{[2]}$ satisfying equations (\ref{qadd}), (\ref{qad}) and (\ref{qmult}). In fact, for any $p$, every smooth Poisson algebra associated with an exact symplectic form $\omega=d\eta$ has a resticted structure given by the formula (cf. \cite{bk}, Th. 1.12)
$$z^{[p]}= L^{p-1}_{t_z} i_{t_z} \eta - i_{t_z^{[p]}}\eta. $$
\end{rem}

The main result of this section is the following theorem.
\begin{Th}\label{2}
Let $R_n$ is a flat commutative algebra over $\bZ/2^{n+1}\bZ$, and $A_n$ be a flat associative algebra over $R_n$ such that  the center $Z_0$ is smooth over $R_0$ and the bracket $\{,\}: Z_0\otimes Z_0 \to Z_0$ is associated with an exact 
symplectic form $\omega=d \eta$.
Then, for every $0\leq m \leq n$, the map 
$$\phi_m:W_{m+1}(Z_0)\to Z_m$$
given by 
\[
\phi_m(z_1,\ldots,z_{m+1})=\sum_{i=0}^{m-1}2^i(\tz_{i+1}^2+2\widetilde{z_{i+1}^{[2]}})^{2^{m-i-1}}+2^m\tz_{m+1}
\]
is a ring homomorphism. Moreover,  the induced homomorphism 
\begin{equation}\label{mm2}
\Phi_m: W_{m+1}(Z_0)\otimes _{W_{m+1}(R_0)} R_m \to Z_{m}, \quad \Phi_m(z\otimes a)= a\phi_m(z)
 \end{equation}
 is an isomorphism.
\end{Th}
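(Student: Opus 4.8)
The plan is to run the proof of Theorem~\ref{p} over again, replacing the naive power maps $z\mapsto\tz^{p^{i}}$ by the restricted-structure corrections that enter formula~(\ref{dmaptwo}). First I would cast $\phi_m$ in the form handled by Lemma~\ref{lem}: take $\pi\colon Z_i\to Z_{i+1}$ to be multiplication by $2$ (well defined since $[2\tilde z,a]=2[\tilde z,a]\in 2^{i+2}A_{i+1}=0$ for $z\in Z_i$), put $\chi^{(0)}=\id$, and for $j\ge1$ set $\chi^{(j)}(z)=(\tz^{2}+2\widetilde{z^{[2]}})^{2^{j-1}}\bmod 2^{j+1}$. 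By Lemma~\ref{lm1}(5) the class $\tz^{2}\bmod 4$ lies in $Z_1$ and is independent of $\tz$, so $v_z:=\tz^{2}+2\widetilde{z^{[2]}}\bmod 4$ is a well-defined element of $Z_1$ depending only on $z$ (its reduction mod~$2$ being $z^{2}$), and iterating Lemma~\ref{lm1}(5) shows $v_z^{\,2^{j-1}}\bmod 2^{j+1}$ is a well-defined element of $Z_j$. With these definitions $\phi_m=\varphi_m$ in the notation of Lemma~\ref{lem}, so it remains to verify hypotheses (a)--(c) of that lemma and then the bijectivity of $\Phi_m$.

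At the first level the verification of (a)--(c) is a short computation in $A_1=A_n\otimes\bZ/4$ using $[\tx,\ty]\equiv 2\{x,y\}\bmod 4$ and the restricted-Poisson identities (\ref{qadd}), (\ref{qad}), (\ref{qmult}): expanding $(\tx+\ty)^{2}=\tx^{2}+\ty^{2}+2\tx\ty-[\tx,\ty]$ and using (\ref{qadd}) yields $\chi^{(1)}(x+y)=\chi^{(1)}(x)+\chi^{(1)}(y)+2\psi_2(x,y)$, which is (c) for $m=1$; multiplicativity of $\chi^{(1)}$ comes out the same way from $(\tx\ty)^{2}=\tx^{2}\ty^{2}-xy[\tx,\ty]$ together with (\ref{qmult}). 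Thus the term $2\widetilde{z^{[2]}}$ is there precisely to absorb the commutator $[\tx,\ty]$ which, for $p=2$, is no longer killed because $\binom{2}{2}$ is not divisible by $p$.

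For $j\ge2$ I would proceed by induction along the lines of the last part of Section~2: the needed instances of (a)--(c) follow from the first-level ones together with the characteristic-$2$ analogue of Proposition~\ref{binom}, namely $(\tx+\ty)^{2}=\tx^{2}+2\tx\ty+\ty^{2}-[\tx,\ty]$ for $\tx,\ty$ central modulo $2^{m+1}$ (so that the correction $[\tx,\ty]\in 2^{m+1}A_{m+1}$ is "small"), the recursion (\ref{rec.f}) for the $\psi_i$, the commutator estimates of Lemma~\ref{lm1}, and the easy identities $(z^{2^{i}})^{[2]}=0$ and $(z^{2^{i}}w)^{[2]}=z^{2^{i+1}}w^{[2]}$ for $i\ge1$, which follow from (\ref{qmult}) because $2=0$ in $Z_0$. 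I expect this bookkeeping --- tracking how the $2^{m+1}$-torsion commutator corrections propagate through repeated squaring and cancel against the restricted-structure terms --- to be the main technical obstacle, with no real shortcut available.

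The bijectivity of $\Phi_m$ I would obtain by repeating \S\ref{isomorphism} almost verbatim. Lemma~\ref{serre} is already stated for an arbitrary prime, so the one genuinely new ingredient is the characteristic-$2$ analogue of Proposition~\ref{keylm}, that $r(Z_{m+1})=\im(F^{m+1}_{Z_0/R_0})$; as there, this reduces to the analogue of Lemma~\ref{lm7}, i.e. to the claim that $\Pi\circ\Phi_m$ equals the Serre morphism $S$ of (\ref{serremap}). The decisive computation is that, for the corrected map, $[v_z,\tx]/4\bmod 2=z\{z,x\}$: expanding $[v_z,\tx]=[\tz^{2},\tx]+2[\widetilde{z^{[2]}},\tx]$ and dividing by $4$ produces $z\{z,x\}+\{z,\{z,x\}\}+\{z^{[2]},x\}$, and the last two terms cancel by (\ref{qad}) since $1+1=0$; iterating squaring (via Lemma~\ref{lm1}) then gives $[\,v_z^{\,2^{k}},\tx\,]/2^{k+2}\bmod 2=z^{2^{k+1}-1}\{z,x\}$ and hence $\Pi(\phi_m(z_1,\ldots,z_{m+1}))=\sum_{i=0}^{m}z_{i+1}^{\,2^{m-i}-1}dz_{i+1}=S((z_1,\ldots,z_{m+1})\otimes1)$, exactly as in the odd case. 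Granting Proposition~\ref{keylm}, the commutative diagram and snake/five-lemma argument that closes \S\ref{isomorphism} then completes the induction and shows $\Phi_m$ is an isomorphism.
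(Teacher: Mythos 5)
Your proposal correctly identifies the overall scheme of the paper's proof — Lemma \ref{lem} with $\pi=2\cdot$, $\chi^{(j)}(z)=(\tz^2+2\widetilde{z^{[2]}})^{2^{j-1}}$, the first-level identities from $(\ref{qadd})$--$(\ref{qmult})$, the identities $(z^{2^i})^{[2]}=0$ and $(z^{2^i}w)^{[2]}=z^{2^{i+1}}w^{[2]}$, the Serre-morphism computation (which you carry out correctly, including the cancellation via $(\ref{qad})$), and the final diagram chase from \S\ref{isomorphism}. But there is a real gap exactly where you yourself flag the ``main technical obstacle''.

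You write the characteristic-$2$ analogue of Proposition \ref{binom} as
$(\tx+\ty)^{2}=\tx^{2}+2\tx\ty+\ty^{2}-[\tx,\ty]$ with ``the correction $[\tx,\ty]\in 2^{m+1}A_{m+1}$ small,'' and propose to track how such corrections propagate and cancel against the restricted-structure terms. This does not close. The bound $[\tx,\ty]\equiv 0\bmod 2^{i+1}$ for $x,y\in Z_i$ is what Lemma \ref{lm1}(1) gives, and it is one power of $2$ short: to obtain, e.g., $\chi^{(2)}(xy)=\chi^{(2)}(x)\chi^{(2)}(y)$ in $Z_2$, you must have $v_xv_y v_x v_y=v_x^2v_y^2$ modulo $8$, i.e.\ $[v_x,v_y]\equiv 0\bmod 8$ with $v_x,v_y\in Z_1$ — not merely modulo $4$. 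The same issue recurs at every level $j\ge 2$. Moreover the restricted-structure terms enter only through $\chi^{(1)}$; above that level there is nothing left to ``cancel against''. What actually makes the argument work is the paper's Lemma \ref{trivcomm}: if $x\in Z_1$ and $y\in Z_i$ ($1\le i\le m$), then $[\tx,\ty]\equiv 0\bmod 2^{i+2}$, one order better than the generic bound. Its proof is not formal — it uses the induction hypothesis that $\Phi_1$ is already known to be surjective, so that every element of $Z_1$ is of the form $\tw^2+2\tv$ with $\tw,\tv$ central mod $2$, and then the binomial $[\tw^2+2\tv,\ty]=\tw[\tw,\ty]+[\tw,\ty]\tw+2[\tv,\ty]\equiv 2\tw[\tw,\ty]+2[\tv,\ty]\equiv 0\bmod 2^{i+2}$. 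This is the one genuinely new idea the characteristic-$2$ case requires, and your proposal does not contain it; without it, the $2$-adic error terms you anticipate do not vanish, and the induction you outline cannot be carried through.
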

The proof of this theorem occupies the rest of this section.
\subsection{}We will prove that $\phi_{m+1}$ is a homomorphism and $\Phi_{m+1}$ is an isomorphism simultaneously. It is clear that $\phi_0= \Phi_0$ are isomorphisms. 

Now consider the general case. We will assume that $\phi_l$ is a homomorphism and $\Phi_l$ is an isomorphism for all $0\le l\le m$ for everything that follows. We will need the following result.
\begin{lm}\label{trivcomm}
If $x\in Z_1$, $y\in Z_i$ with $1\le i\le m$, and $\tx,\ty\in A_{n}$ are any liftings then 
\[
[\tx,\ty]\equiv 0 \mod 2^{i+2}.
\]
\end{lm}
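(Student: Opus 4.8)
The plan is to deduce the gain of one power of $2$ from the symplectic geometry of $Z_0$, via the observation that an element of $Z_1$ reduces to an element of $Z_0$ with vanishing differential.

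\emph{Set-up.} Since $y\in Z_i$, the image of $[\tx,\ty]$ in $A_i$ is the commutator of $y\in Z(A_i)$ with the reduction of $\tx$, hence $[\tx,\ty]\equiv 0\mod 2^{i+1}$; thus $c:=\tfrac{1}{2^{i+1}}[\tx,\ty]\mod 2\in A_0$ is defined, and the lemma is equivalent to $c=0$. By parts (3) and (4) of Lemma \ref{lm1} (applied with first index $0$ and second index $i$), for any $z\in Z_0$ the element $\tfrac{1}{2^{i+1}}[\tz,\ty]\mod 2$ lies in $Z_0$ and is independent of the chosen liftings, and the resulting map $D_y\colon Z_0\to Z_0$ is a derivation, which is $R_0$-linear because $Z_0$ is an $R_0$-algebra. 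Since a lifting of $x\in Z_1$ to $A_n$ is in particular a lifting of $r(x)\in Z_0$, we have $c=D_y(r(x))$.

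\emph{Key step.} I claim $d(r(x))=0$ in $\Omega^1_{Z_0/R_0}$. This is where centrality of $x$ modulo $4$ (not just modulo $2$) is used: for any $b\in A_n$ the image of $[\tx,b]$ in $A_1$ vanishes, so $[\tx,b]\equiv 0\mod 4$. Taking $b$ to be a lifting of an arbitrary $a\in Z_0$ and dividing by $2$ gives $\{r(x),a\}=\tfrac12[\tx,b]\mod 2=0$, i.e. $\langle\mu,\,d(r(x))\otimes da\rangle=0$ for every $a\in Z_0$. As the differentials $da$ generate $\Omega^1_{Z_0/R_0}$ and $\mu$ is non-degenerate — here the standing hypotheses of the section (smoothness of $Z_0$, non-degeneracy of $\{,\}$) are used — the isomorphism $\Omega^1_{Z_0/R_0}\iso T_{Z_0/R_0}$ induced by $\mu$ forces $d(r(x))=0$.

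\emph{Conclusion.} Every $R_0$-linear derivation $Z_0\to Z_0$ factors through the universal derivation $d\colon Z_0\to\Omega^1_{Z_0/R_0}$, hence annihilates every element with vanishing differential; applying this to $D_y$ and $r(x)$ gives $c=D_y(r(x))=0$, that is, $[\tx,\ty]\equiv 0\mod 2^{i+2}$. The conceptual content lies entirely in the key step — recognizing that the extra divisibility is governed by $d(r(x))$ and that this differential vanishes; the rest is formal. The main thing to be careful about is the bookkeeping in the set-up: which divisions $\tfrac1{2^k}(\cdot)$ are legitimate, in which ring ($A_{n-k}$ or $Z_{n-k}$) each intermediate element lives, and the compatibility of the two roles of $x$ (as an element of $Z_1$ and, after reduction, of $Z_0$). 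Note the argument uses neither the exactness of $\omega$ nor the running induction on $\phi_l$ and $\Phi_l$.
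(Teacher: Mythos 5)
Your proof is correct, and it takes a genuinely different route from the paper's. The paper invokes the running induction hypothesis — specifically the surjectivity of $\Phi_1\colon W_2(Z_0)\otimes R_1\to Z_1$ — to reduce to the case $\tx=\tw^2+2\tv$ with $\tw,\tv$ central modulo $2$, and then finishes by an explicit commutator computation using $[\tw,\ty],[\tv,\ty]\equiv 0\mod 2^{i+1}$. You instead isolate the conceptual content: $x\in Z_1$ forces the Hamiltonian vector field of $r(x)$ to vanish, so non-degeneracy of $\mu$ gives $d(r(x))=0$ in $\Omega^1_{Z_0/R_0}$, and then $c=D_y(r(x))=0$ because any $R_0$-linear derivation of $Z_0$ factors through the universal one. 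Both arguments ultimately exploit that $d(r(x))=0$ — the paper detects this by expressing $r(x)$ as a Frobenius image via $\Phi_1$, you detect it directly from the symplectic non-degeneracy — but yours is more self-contained: it avoids the inductive hypothesis on $\Phi_l$ entirely and uses only the standing non-degeneracy assumption, so the lemma as you prove it is a statement about non-degenerate deformations with no reference to the ongoing Witt-vector construction. One cosmetic point: by Lemma~\ref{lm1}(3) your $c$ actually lies in $Z_0$, not merely $A_0$, though this does not affect the argument.
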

\begin{proof}
We may assume that $m\geq 1$ (otherwise, the statement is empty). Then, by our induction hypothesis the map $\Phi_1: W_{2}(Z_0)\otimes R_1 \to Z_{1}$ is surjective. Thus, it suffices to check the Lemma for $\tx$ of the form
    $\tx= \tw^2 + 2\tv$ with $\tw, \tv\in A_n$ central modulo $2$.  We have
    \begin{align*}
[\tx,\ty]&=[\widetilde{w}^{2}+2\widetilde{v},\ty]\\
&=\tw [\widetilde{w}, \ty]  +  [\widetilde{w}, \ty] \tw    +  2[\widetilde{v},\ty] \equiv 0 \mod 2^{i+2}
\end{align*}
since the elements $[\widetilde{w}, \ty], [\widetilde{v}, \ty]$ are central modulo $2^{i+1}$. 
\end{proof}
\begin{cor}\label{2powmult}
If $x,y\in Z_i$ with $1\le i\le m$ and $\tx,\ty\in A_{i+1}$ are any liftings then, we have 
\[
(\tx\ty)^{2}=\tx^{2}\ty^{2},
\]
\[ 
(\tx+\ty)^2\equiv \tx^2+2\tx\ty+\ty^2.
\]
\end{cor}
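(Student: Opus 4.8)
The plan is to show that under these hypotheses the liftings $\tx$ and $\ty$ actually commute in $A_{i+1}$; once that is established, both identities follow by a one-line expansion of the square.

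First I would reduce the commutativity to Lemma \ref{trivcomm}. Since $i\ge 1$, the ring $A_1$ is a quotient of $A_i$, and the reduction homomorphism $A_i\to A_1$ is surjective; as $x\in Z_i=Z(A_i)$, its image $\bar x\in A_1$ commutes with the whole of $A_1$, hence $\bar x\in Z_1$. Now choose any lifting $\hat x\in A_n$ of $\tx$ and any lifting $\hat y\in A_n$ of $\ty$ (possible since $A_n\to A_{i+1}$ is surjective). Chasing reductions, $\hat x$ is then a lifting of $\bar x\in Z_1$ and $\hat y$ is a lifting of $y\in Z_i$, so Lemma \ref{trivcomm} gives $[\hat x,\hat y]\equiv 0\mod 2^{i+2}$ in $A_n$. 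Passing to $A_{i+1}$, which is flat over $\bZ/2^{i+2}\bZ$, we obtain $[\tx,\ty]=0$ in $A_{i+1}$, i.e.\ $\tx\ty=\ty\tx$.

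The remaining two identities are then formal: $(\tx\ty)^2=\tx(\ty\tx)\ty=\tx^2\ty^2$, and $(\tx+\ty)^2=\tx^2+\tx\ty+\ty\tx+\ty^2=\tx^2+2\tx\ty+\ty^2$ in $A_{i+1}$, which is the asserted congruence. I do not expect a genuine obstacle here; the only points requiring care are the bookkeeping of liftings and reductions among the rings $A_j$, and the verification that Lemma \ref{trivcomm} is applicable — that is, that the reduction of $x$ modulo $4$ lies in $Z_1$, which is exactly where surjectivity of the reduction map and the hypothesis $i\ge 1$ enter. It is worth noting that, unlike the odd-characteristic case where $\binom{p}{2}\equiv 0 \mod p$ could be exploited, here one genuinely needs the stronger commutator vanishing supplied by Lemma \ref{trivcomm} rather than the elementary argument behind (\ref{chimult}).
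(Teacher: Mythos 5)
Your proof is correct and matches the paper's (implicit) argument: the corollary is stated without proof as an immediate consequence of Lemma \ref{trivcomm}, and you supply exactly the expected deduction — reduce $x$ to $Z_1$, lift to $A_n$, invoke the lemma to get $[\tx,\ty]=0$ in $A_{i+1}$, then expand. Your closing remark contrasting this with the $\binom{p}{2}$ trick from the odd-$p$ case is also apt.
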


Let $\pi:Z_i\to Z_{i+1}$ be given by $\pi(z)=2z$ and $\chi^{(i)}:Z_0\to Z_i$ be defined by $\chi^{(i)}(z)=(\tz^2+2\rz)^{2^{i-1}}$ for $0<i\le m+1$ with $\chi^{(0)}(z)=z$. We will use Lemma \ref{lem} to show that $\phi_{m+1}$ is a homomorphism.  Let us check that $\chi^{(i)}$ is multiplicative.
For $i=1$ using formula (\ref{qmult}), we have 
$$\chi^{(1)}(xy)= \tx^2\ty^2- \tx[\tx,\ty]\ty +2(xy)^{[2]}=  \tx^2\ty^2- 2xy\{x,y\} +2(x^2 y^{[2]} +y^2x^{[2]} +xy\{x,y\})$$
$$= \chi^{(1)}(x)\chi^{(1)}(y).$$
The general case follows now from Corollary \ref{2powmult}.

 Now we show property (b) of Lemma \ref{lem} which is the following identity
 \[
 (\tx^2+2\rx)^{2^{j}}(\ty^2+2\ry)^{2^{j-i}}\equiv ((\tx^{2^i}\ty)^2+2\widetilde{(x^{2^i}y)^{[2]}})^{2^{j-i}} \mod 2^{j-i+2}
\] 
for every $0\leq i\leq j\leq m$.
 If $i=0$ then this is equivalent to $\chi^{(j)}$ being multiplicative. 
 Assume that $i>0$.  By Corollary \ref{2powmult} it follows that 
\[
(\tx^2+2\rx)^{2^{j}}(\ty^2+2\ry)^{2^{j-i}}\equiv ((\tx^2+2\rx)^{2^{i}}(\ty^2+2\ry))^{2^{j-i}}\mod 2^{j-i+2}
\]
and thus to show the desired result it suffices to check that 
\[
(\tx^2+2x^{[2]})^{2^{i}}(\ty^2+2y^{[2]})\equiv (\tx^{2^i}\ty)^2+2(x^{2^i}y)^{[2]}\mod 4.
\]
Now $(\tx^2+2x^{[2]})^{2^{i}}\equiv \tx^{2^{i+1}}\mod 4$. Thus, we have
 \begin{align*}
(\tx^2+2x^{[2]})^{2^{i}}(\ty^2+2y^{[2]})&\equiv \tx^{2^{i+1}}(\ty^2+2y^{[2]}) \\
 &\equiv (\tx^{2^i}\ty)^2+2x^{2^{i+1}}y^{[2]}\\
 &\equiv (\tx^{2^i}\ty)^2+2(x^{2^i}y)^{[2]}\mod 4,
\end{align*}
where the last congruence is implied by  (\ref{qmult}).

Let us check property (c) of Lemma \ref{lem}. It suffices to prove the following claim.
\begin{cl}\label{2homclaim}
\begin{enumerate}[(a)]
\item If $x,y\in Z_{m+2-j}$ with $1<j< m+2$ and $\tx, \ty \in A_{m+1}$ are any liftings  then 
\[
2^{j-1}\psi_j(x,y)=(\tx+\ty)^{2^{j-1}}-(\tx^2+\ty^2)^{2^{j-2}}\mod 2^{m+2}
\]
\item If $x,y\in Z_0$ then for $1<j\le m+1$, we have  
\[
2^{j-1}\psi_j(x,y)=\chi^{(j-1)}(x+y)-(\widetilde{\chi^{(1)}(x)}+\widetilde{\chi^{(1)}(y)})^{2^{j-2}}\mod 2^j
\]
\end{enumerate}
\end{cl}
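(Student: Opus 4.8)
The plan is to establish parts (a) and (b) together by induction on $j$, the recursive formula (\ref{rec.f}) for the polynomials $\psi_j$ doing the real work, and then to feed the Claim into hypothesis (c) of Lemma~\ref{lem} for the maps $\chi^{(i)}(z)=(\tz^2+2\rz)^{2^{i-1}}$, $\pi(z)=2z$, which (parts (a) and (b) of Lemma~\ref{lem} having already been verified above) yields that $\phi_{m+1}$ is a ring homomorphism, and hence, by the argument of §\ref{isomorphism}, that $\Phi_{m+1}$ is an isomorphism. The new commutative ingredient throughout is Corollary~\ref{2powmult}: squaring a congruence between central-mod-$2$ elements loses no $2$-adic precision, whereas squaring a sum introduces the characteristic-$2$ defect $(\ta+\tb)^2=\ta^2+\tb^2+2\ta\tb+[\ta,\tb]$.

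For the base case $j=2$ one has $\psi_2(x,y)=xy$. For part (b): choosing $\widetilde{x+y}=\tx+\ty$ and using (\ref{qadd}) (so $\widetilde{(x+y)^{[2]}}\equiv\rx+\ry+\widetilde{\{x,y\}}\mod 2$) together with $[\tx,\ty]\equiv 2\widetilde{\{x,y\}}\mod 4$, one computes $\chi^{(1)}(x+y)-\chi^{(1)}(x)-\chi^{(1)}(y)=(\tx+\ty)^2-\tx^2-\ty^2+2\big(\widetilde{(x+y)^{[2]}}-\rx-\ry\big)\equiv 2\tx\ty-[\tx,\ty]+2\widetilde{\{x,y\}}\equiv 2xy\mod 4$, which is the assertion. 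For part (a): $(\tx+\ty)^2-\tx^2-\ty^2=\tx\ty+\ty\tx=2\tx\ty-[\tx,\ty]$, and $2\tx\ty=2xy$ in $A_{m+1}$ since $\tx\ty$ reduces to $xy$, so the assertion comes down to the vanishing of $[\tx,\ty]$ to the required order; for the elements at which (a) gets invoked — liftings of elements central modulo $4$, namely of the shape $(\tz^2+2\rz)^{2^{k-1}}$ — this vanishing is supplied by Lemma~\ref{trivcomm} (after expanding $[(\tx^2+2\rx)^{2^{k-1}},(\ty^2+2\ry)^{2^{k-1}}]$ as a sum of terms each containing a factor $[(\tx^2+2\rx),\,\cdot\,]$ with $\tx^2+2\rx\in Z_1$).

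For the inductive step specialize (\ref{rec.f}) at $p=2$ to $2^{j-1}\psi_j(x,y)=2\,\big(2^{j-2}\psi_{j-1}(x,y)\big)(x^2+y^2)^{2^{j-3}}+\big(2^{j-2}\psi_{j-1}(x,y)\big)^2$ for $j\ge 3$. For (b), write $\widetilde{P}:=\widetilde{\chi^{(1)}(x)}+\widetilde{\chi^{(1)}(y)}$ and substitute the $(j-1)$-case $2^{j-2}\psi_{j-1}(x,y)\equiv(\widetilde{P}+2\widetilde{xy})^{2^{j-3}}-\widetilde{P}^{2^{j-3}}\mod 2^{j-1}$ (using $\chi^{(j-2)}(x+y)=(\widetilde{P}+2\widetilde{xy})^{2^{j-3}}$, obtained by iterating the $j=2$ case); squaring this modulo $2^j$ via Corollary~\ref{2powmult} and collapsing the cross term by $a^2-b^2=(a-b)(a+b)$, together with $\widetilde{P}\equiv x^2+y^2\mod 2$ (so $\widetilde{P}^{2^{j-3}}\equiv(x^2+y^2)^{2^{j-3}}\mod 2$), one arrives at $(\widetilde{P}+2\widetilde{xy})^{2^{j-2}}-\widetilde{P}^{2^{j-2}}\mod 2^j$, which is (b) at $j$. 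For (a), the same recursion and the $(j-1)$-case of (a), after squaring via Corollary~\ref{2powmult}, reduce the statement modulo $2^{m+2}$ to the vanishing of commutators of the shape $[\,\tz,(\tw^2+\cdots)\,]$ with one argument central modulo $2$; the fact that such commutators occur symmetrized, combined with Lemma~\ref{lm1} (so that the commutator is itself central to high order) and Lemma~\ref{trivcomm}, kills them and promotes the modulus from the $2^j$ available in (b) to $2^{m+2}$.

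Finally, I would assemble hypothesis (c) of Lemma~\ref{lem} at the top level $m+1$ (lower levels being the induction hypothesis) from the Claim: with the compatible liftings $\widetilde{\chi^{(k)}(z)}:=(\tz^2+2\rz)^{2^{k-1}}$ (squaring one gives the next, and $\widetilde{\chi^{(m+1)}(z)}$ is already central), part (a) applied with $j=m+2-k$ to $\chi^{(k)}(x),\chi^{(k)}(y)\in Z_k=Z_{m+2-j}$ ($1\le k\le m$) writes each interior summand of $\sum_{i=0}^{m+1}2^i\psi_{i+1}(\chi^{(m+1-i)}(x),\chi^{(m+1-i)}(y))$ as a consecutive difference $(\widetilde{\chi^{(k)}(x)}+\widetilde{\chi^{(k)}(y)})^{2^{m+1-k}}-(\widetilde{\chi^{(k+1)}(x)}+\widetilde{\chi^{(k+1)}(y)})^{2^{m-k}}$; these telescope, the $i=0$ term and part (b) at $j=m+2$ (covered by the same induction) absorb the two ends, and using $\chi^{(m+1)}(x+y)=(\widetilde{\chi^{(1)}(x+y)})^{2^m}$ with $\widetilde{\chi^{(1)}(x+y)}\equiv\widetilde{P}+2\widetilde{xy}\mod 4$ (the $j=2$ case of (b)) the sum collapses to $\chi^{(m+1)}(x+y)$ in $Z_{m+1}$. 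The isomorphism statement for $\Phi_{m+1}$ then runs exactly as in §\ref{isomorphism}, using Lemma~\ref{serre} (valid for all $p$), the characteristic-$2$ analogue of Proposition~\ref{keylm}, and the fact — checked as in Lemma~\ref{lm7}, now with Corollary~\ref{2powmult} and the restricted structure — that the Serre morphism of the new $\phi_m$ is still given by (\ref{serremap}). The one genuinely delicate point I expect is the commutator bookkeeping in the inductive step: each of the $\sim j$ squarings assembling $\psi_j$ manufactures a commutator, and the proof is an accounting of these modulo the relevant power of $2$, with part (b)'s modulus $2^j$ exactly what survives from arguments central only modulo $2$ and part (a)'s $2^{m+2}$ bought from higher centrality via Lemma~\ref{trivcomm}.
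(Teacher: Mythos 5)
Your proposal follows the same route as the paper: establish the base case $j=2$ using \eqref{qadd} (for part (b)) together with $[\tx,\ty]\equiv 2\widetilde{\{x,y\}}\bmod 4$, and the characteristic-$2$ identity $(\tx+\ty)^2=\tx^2+2\tx\ty+\ty^2$ supplied by Corollary~\ref{2powmult} (for part (a)); then run the induction on $j$ by specializing the recursion \eqref{rec.f} at $p=2$, substituting the $(j-1)$-case, squaring via Corollary~\ref{2powmult}, and controlling the modulus by Lemma~\ref{trivcomm}. One small simplification you missed in part (a): you restrict to liftings of the special form $(\tz^2+2\rz)^{2^{k-1}}$ and then expand the commutator by hand, but the hypothesis $x,y\in Z_{m+2-j}$ already forces their reductions to lie in $Z_1$, so Lemma~\ref{trivcomm} applied with $i=m+2-j$ gives $[\tx,\ty]\equiv 0\bmod 2^{m+2}$ for \emph{arbitrary} liftings — which is exactly what Corollary~\ref{2powmult} records — so no specialization is needed and part (a) holds in the stated generality. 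Your handling of part (b) is slightly more careful than the paper's wording at $j=3$, since you retain the $(2^{j-2}\psi_{j-1})^2$ term rather than invoking $\psi_j\equiv\psi_{j-1}(x^2+y^2)^{2^{j-3}}\bmod 2$ (which only holds for $j\ge 4$), but this is a presentational refinement, not a change of method.
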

If $j=2$ then part (a)  follows from Corollary \ref{2powmult}. Now assume property (a) holds for $2\le l< j$ then the recursive
definition of $\psi$, equation (\ref{rec.f}), and the induction hypothesis imply 
$$
2^{j-1}\psi_{j}(x,y)\equiv 2((\tx+\ty)^{2^{j-2}}-(\tx^2+\ty^2)^{2^{j-3}})(\tx^2+\ty^2)^{2^{j-3}}+((\tx+\ty)^{2^{j-2}}-(\tx^2+\ty^2)^{2^{j-3}})^2
$$
modulo $2^{m+2}$. Applying Corollary \ref{2powmult} to the right hand side, we have the desired result. Now let's consider property (b). If $j=2$ then by using (\ref{qadd}), we have 
\begin{align*}
\chi^{(1)}(x+y)-(\chi^{(1)}(x)+\chi^{(1)}(y))&\equiv 2xy\mod 4
\end{align*} 
which is the desired result. Now assuming that property (b) holds for $2\le l< j$, we find
$$\chi^{(j-1)}(x+y)= (\widetilde{\chi^{(j-2)}(x+y)})^2= ( (\widetilde{\chi^{(1)}(x)}+\widetilde{\chi^{(1)}(y)})^{2^{j-3}}  + 2^{j-2}\widetilde{\psi_{j-1}(x,y)} )^2$$
$$ = (\widetilde{\chi^{(1)}(x)}+\widetilde{\chi^{(1)}(y)})^{2^{j-2}} + 2^{j-1}(x^2+y^2)^{2^{j-3}}  \psi_{j-1}(x,y).$$
The result then follows from the congruence  $\psi_{j}(x,y)\equiv \psi_{j-1}(x,y) (x^2+y^2)^{2^{j-3}} \mod 2$.   

 We have shown that $\pi^i$ and $\chi^{(i)}$ for $0\le i\le m+1$ satisfy the conditions of Lemma \ref{lem} and, hence,  we have that $\phi_{m+1}$ is a homomorphism.

\subsection{} It remains to show that $\Phi_{m+1}$ is an isomorphism. As in the odd characteristic case, it suffices to check that 
\begin{equation}\label{eq21}
 r(Z_{m+1})=Im(F^{m+1}_{Z_0/R_0}),
 \end{equation}
  where $r:Z_{m+1}\to Z_0$ is the reduction map and $F^{m+1}_{Z_0/R_0}: Z_0^{(m+1)}\to Z_0$ is the $(m+1)^{th}$ iterate of the relative Frobenius.
  Let $\Pi:Z_{m}\to \Omega^1_{Z_0/R_0}$ be the map defined by formulas (\ref{eqpi1}), (\ref{eqpi2}).     
\begin{lm}
The image of an element $(z_1,\ldots,z_{m})\otimes a\in W_{m+1}(Z_0)\otimes R_{m}$ under the composition 
\[
S=\Pi\circ \Phi_{m}:W_{m+1}(z_0)\otimes R_m\rar{\sim}Z_{m}\to \Omega^1_{Z_0/R_0}
\]
 is given by the formula 
\begin{equation}
S((z_1,\ldots,z_{m+1})\otimes a)=\overline{a}\sum_{i=0}^{m}z_{i+1}^{2^{m-i}-1}dz_{i+1}
\end{equation}
where $\overline{a}\in R_0$ is the reduction of $a$ modulo $2$.
\end{lm}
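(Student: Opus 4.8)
The plan is to follow the proof of Lemma~\ref{lm7} essentially verbatim, replacing its Claim~(\ref{e9}) by a characteristic~$2$ substitute in which the obstruction caused by $\binom{2}{2}\not\equiv 0\pmod 2$ is absorbed by the correction term $2\widetilde{z^{[2]}}$ together with the restricted Poisson identity~(\ref{qad}). First I would reduce to the case $a=1$: since $R_n$ is central in $A_n$, a lifting $\ta\in R_n$ of $a\in R_m$ satisfies $\widetilde{a\phi_m(z)}=\ta\cdot\widetilde{\phi_m(z)}$, hence $[\widetilde{a\phi_m(z)},\tx]=\ta\,[\widetilde{\phi_m(z)},\tx]$; dividing by $2^{m+1}$ and reducing modulo $2$ gives $\Pi\bigl(\Phi_m((z_1,\dots,z_{m+1})\otimes a)\bigr)=\overline a\,\Pi\bigl(\phi_m(z_1,\dots,z_{m+1})\bigr)$. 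It therefore suffices, for each $x\in Z_0$ with lifting $\tx\in A_n$, to identify the derivation $x\mapsto \tfrac1{2^{m+1}}[\phi_m(z_1,\dots,z_{m+1}),\tx]\bmod 2$ on $Z_0$, where the right-hand side is computed in $A_n$.

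The technical core is the following claim. Fix $z\in Z_0$, a lifting $\tz$, and put $u=\tz^2+2\widetilde{z^{[2]}}$; by Lemma~\ref{lm1}(5) it is central modulo $4$, and then $w:=u^{2^k}$ is central modulo $2^{k+2}$. I claim that for every $k\geq 0$,
\[
[u^{2^k},\tx]\ \equiv\ 2^{k+2}\,z^{2^{k+1}-1}\{z,x\}\ \pmod{2^{k+3}}.
\]
For $k=0$ one expands $[u,\tx]=\bigl(2\tz[\tz,\tx]-[\tz,[\tz,\tx]]\bigr)+2[\widetilde{z^{[2]}},\tx]$ and reads off, modulo $8$, that $\tfrac14[u,\tx]\bmod 2 = z\{z,x\}+\{z,\{z,x\}\}+\{z^{[2]},x\}$, whereupon the last two terms cancel by~(\ref{qad}). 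For the inductive step one applies the same commutator identity to $w=u^{2^k}$: using $w\equiv z^{2^{k+1}}\bmod 2$ and the inductive hypothesis, the term $2w[w,\tx]$ contributes $2^{k+3}z^{2^{k+2}-1}\{z,x\}$ modulo $2^{k+4}$, while writing $[w,\tx]=2^{k+2}b$ with $b\in A_n$ one gets $[w,[w,\tx]]=2^{k+2}[w,b]\in 2^{2k+4}A_n$ because $w$ is central modulo $2^{k+2}$, and $2k+4\geq k+4$ for $k\geq 0$.

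Granting the claim, I would write $\phi_m(z_1,\dots,z_{m+1})=\sum_{i=0}^{m-1}2^i u_i^{2^{m-i-1}}+2^m\tz_{m+1}$ with $u_i=\tz_{i+1}^2+2\widetilde{z_{i+1}^{[2]}}$, apply the claim with $k=m-i-1$ to the $i$-th summand and the elementary congruence $[\tz_{m+1},\tx]\equiv 2\{z_{m+1},x\}\bmod 4$ to the last term, and conclude that each contribution is $\equiv 2^{m+1}z_{i+1}^{2^{m-i}-1}\{z_{i+1},x\}\bmod 2^{m+2}$, so that
\[
\tfrac1{2^{m+1}}[\phi_m(z_1,\dots,z_{m+1}),\tx]\bmod 2\ =\ \sum_{i=0}^{m} z_{i+1}^{2^{m-i}-1}\{z_{i+1},x\}.
\]
Reading the right-hand side as the value at $x$ of the vector field $\sum_i z_{i+1}^{2^{m-i}-1}t_{z_{i+1}}$ and applying $i_{(-)}\mu^{-1}=i_{(-)}\omega$ together with $i_{t_z}\omega=dz$ yields $\Pi\bigl(\phi_m(z_1,\dots,z_{m+1})\bigr)=\sum_{i=0}^m z_{i+1}^{2^{m-i}-1}dz_{i+1}$; combined with the first paragraph this is the asserted formula for $S$.

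I expect the main obstacle to be the base case $k=0$ of the key claim: one must organize the mod-$8$ bookkeeping so that exactly the two ``anomalous'' terms $\{z,\{z,x\}\}$ and $\{z^{[2]},x\}$ survive and then cancel against each other via~(\ref{qad}) — this is the one place where being in characteristic $2$ (and having introduced the correction $2\widetilde{z^{[2]}}$) is essential. Once this identity is in hand, the induction on $k$, the reduction in $a$, and the passage from the bracket to $\Omega^1_{Z_0/R_0}$ are all formally parallel to the odd-characteristic case.
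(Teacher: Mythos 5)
Your proof is correct and follows essentially the same approach as the paper: you work with the corrected element $u=\tz^2+2\widetilde{z^{[2]}}$, obtain the base-case congruence $[u,\tx]\equiv 4z\{z,x\}\bmod 8$ by exploiting the cancellation forced by identity~(\ref{qad}), and then bootstrap to higher powers $u^{2^k}$ by squaring. The only organizational difference is that you absorb the inner-commutator error directly via the estimate $[w,[w,\tx]]=2^{k+2}[w,b]\in 2^{2k+4}A_n$ (using only the centrality of $w=u^{2^k}$ modulo $2^{k+2}$, which comes from Lemma~\ref{lm1}(5)), whereas the paper first proves the separate Claim~(\ref{eq30}) for $z\in Z_i$ via Lemma~\ref{trivcomm} and then specializes; both routes yield the same formula.
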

\begin{proof}
We start with the following 
\begin{cl}
If $x\in Z_0$ and $z\in Z_i$ with $0< i\le m$ and $\tx,\tz\in A_{n}$ are any liftings we have 
\begin{equation}\label{eq30}
[\tz^2,\tx]\equiv 2\tz[\tz,\tx]\mod 2^{i+3}
\end{equation}
\end{cl}
Indeed we have 
\begin{align*}
[\tz^2,\tx]&=\tz[\tz,x]+[\tz,\tx]\tz\\
&=2\tz[\tz,\tx]+[\tz,[\tz,\tx]]
\end{align*}
Since $[\tz,\tx] \mod 2^{i+2}\in Z_{i+1}$ Lemma \ref{trivcomm} implies that $[\tz,[\tz,\tx]]\equiv 0\mod 2^{i+3}$ and the result follows. 

Formula (\ref{eq30}) implies, by induction,  that for $z\in Z_1$ and $i> 1$, we have 
\begin{equation}\label{2powcomm}
[\tz^{2^{i-1}},\tx]\equiv 2^{i-1}\tz^{2^{i-1}-1}[\tz,\tx]\mod 2^{i+2}.
\end{equation}

On the other hand, for $x, z\in Z_0$ using (\ref{qad}),  we have
\begin{align*}
[\tz^2+2\rz,\tx]&=[\tz^2,\tx]+2[\rz,\tx]\\
&= 2\tz[\tz,\tx]+[\tz,[\tz,\tx]]+2[\rz,\tx]\\
&\equiv 2\tz[\tz,\tx]+[\tz,[\tz,\tx]]+[\tz,[\tz,\tx]]\\
&\equiv 2\tz[\tz,\tx] \mod 8.
\end{align*}
This fact along with (\ref{2powcomm}) implies that for every  $x, z\in Z_0$, one has
\[
[(\tz^2+2\rz)^{2^{i-1}},\tx]\equiv  2^{i} (\tz^2+2\rz)^{2^{i-1}-1}  \tz [\tz,\tx]   = 2^{i+1} z^{2^{i}-1} \{z, x\} \mod 2^{i+2}.
\]
which proves the result.

\end{proof}
The above Lemma together with Lemma \ref{serre} imply (\ref{eq21}). Theorem \ref{2} is proven.

\section{Applications} Let  $S_n$ be a flat scheme over $\bZ/p^{n+1}\bZ$, and let 
$X_n \rar{f_n} S_n$ be a smooth scheme over  $S_n$. 
For $0\leq m\leq n$ we set  $X_m= X_n \times spec\,  \bZ/p^{m+1}\bZ \rar{f_m} S_n \times spec\,  \bZ/p^{m+1}\bZ = S_m $.  One has the relative Frobenius diagram.
\begin{equation}
\xymatrix{
X_0  \ar[dr]_{f_0}  \ar[r]^{F_{X_0/S_0}} & X^{(1)}_0  \ar[d]^{f'_0}  \ar[r]^{\pi_{X_0/S_0}} & X_0 \ar[d]^{f_0}\\
& S_0  \ar[r]^{F_S} & S_0\\
}
\end{equation}
Since the relative Frobenius morphism $F_{X_0/S_0}: X_0\to X^{(1)}_0$ is a homeomorphism the functor  $F_{X_0/S_0 *}$ induces an equivalence between the category of Zariski sheaves on $X_0$ and that on $X^{(1)}_0$.
We shall also identify the categories of Zariski sheaves on $X_n$ and on $X_0$.

We will write $D_{X_m/S_m}$ for the sheaf of PD differential operators on $X_m$ (\cite{bo}, \S 2) and $Z_m$ for its center. One has a canonical isomorphism of $f_0^{\prime  -1}\cO_{S_0}$-algebras on $X^{(1)}_0$
\begin{equation}\label{centrdif}
F_{X_0/S_0 *}(Z_0) \simeq S^{\cdot}T_{X^{(1)}_0/S_0}
 \end{equation}
 given by the p-curvarure map (see, {\it e.g.} \cite{ov}, Theorem 2.1). If $n>0$,  the construction from \S \ref{def} applied to affine charts of $f_n: X_n\to S_n$ yields  a  biderivation
 $$\{,\}: Z_0\otimes _{f_0^{-1}\cO_{S_0}} Z_0 \to Z_0,$$
that can be interpreted via isomorphism  (\ref{centrdif}) as a bivector field $$\mu \in \Gamma({\bf T}^*_{X^{(1)}_0/S_0}, \bigwedge^2 T_{{\bf T}^*_{X^{(1)}_0/S_0}/S_0})$$ on the cotangent space, ${\bf T}^*_{X^{(1)}_0/S_0}$.
On the other hand, the cotangent space to any smooth scheme has a canonical $1$-form (the ``contact form'') $$\eta_{can} \in \Gamma({\bf T}^*_{X^{(1)}_0/S_0},   \Omega^1_{{\bf T}^*_{X^{(1)}_0/S_0}/S_0})$$ whose differential $\omega_{can}=d\eta_{can}$ is a symplectic form. 
\begin{lm}\label{konts} We have that $\mu^{-1} = - \omega_{can}$.
\end{lm}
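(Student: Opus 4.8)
The plan is to reduce the statement to a local computation on an affine chart of $X_n \to S_n$, compute the bracket $\{,\}$ on $Z_0 \simeq S^{\cdot}T_{X_0^{(1)}/S_0}$ explicitly in coordinates, and match it against the canonical Poisson bracket on the cotangent bundle. First I would work Zariski-locally, choosing \'etale coordinates $x_1,\dots,x_d$ on an affine open $U_n \subset X_n$ over $S_n$, so that $D_{U_n/S_n}$ is generated over $\cO_{U_n}$ by the PD derivations $\partial_1,\dots,\partial_d$ together with their divided powers; the subalgebra of $D_{U_0/S_0}$ that maps isomorphically (under $F_{X_0/S_0 *}$) to $S^{\cdot}T_{X_0^{(1)}/S_0}$ is the polynomial algebra $\cO_{S_0}[x_1^p,\dots,x_d^p; \partial_1^p,\dots,\partial_d^p]$ — equivalently the $p$-curvatures $\psi(\partial_i) = \partial_i^p$ together with the $p$-th powers of the coordinate functions. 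The coordinates on ${\bf T}^*_{X_0^{(1)}/S_0}$ are then $\xi_i = \pi_{X_0/S_0}^*(x_i)$ (base) and $\theta_i = \partial_i^p$ (fiber), in which $\eta_{can} = \sum_i \theta_i\, d\xi_i$ and $\omega_{can} = \sum_i d\theta_i \wedge d\xi_i$.

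Next I would carry out the bracket computation using the defining formula (\ref{poi}): for $y, z \in Z_0$ lift to $\tilde y, \tilde z \in A_n = D_{U_n/S_n}$ and compute $\frac1p[\tilde y,\tilde z] \bmod p$. The only nontrivial pairings to check are between a function coordinate and a $p$-curvature, i.e. $\{x_i^p, \partial_j^p\}$, since $\{x_i^p,x_j^p\}=0$ and $\{\partial_i^p,\partial_j^p\}=0$ already hold modulo $p$ (the operators $x_i$ commute among themselves, and the $\partial_i$ commute among themselves, so the commutators of their $p$-th powers vanish even before dividing by $p$ — here one uses that a flat lift of a commutative situation stays commutative modulo $p^2$ in the relevant sense). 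For the mixed bracket, take lifts $x_i, \partial_j \in D_{U_n/S_n}$ (the same symbols make sense over $\bZ/p^{n+1}$); then $[\partial_j, x_i] = \delta_{ij}$, and I would expand $[\partial_j^p, x_i^p]$ by the standard identity, observing that modulo $p^2$ one gets $[\partial_j^p, x_i^p] \equiv p\,\delta_{ij}\cdot(\text{unit})$, with the unit being $\pm 1$; dividing by $p$ and reducing mod $p$ yields $\{x_i^p,\partial_j^p\} = \pm\delta_{ij}$ in $Z_0$. (A clean way to see the sign is to note $[\partial^p, x] = 0$ while $[\partial, x^p] = p x^{p-1}$, and iterate, or to use that over $\bZ/p^2$ one has $[\partial_j^p, x_i^p] \equiv p\,\delta_{ij}$ after the $p$-curvature normalization; this is exactly where the sign in $\mu^{-1} = -\omega_{can}$ comes from, matching the computation of Kanel-Belov and Kontsevich cited in the introduction.) Translating via $\theta_j = \partial_j^p$, $\xi_i = x_i^p$, this says $\{\xi_i, \theta_j\} = -\delta_{ij}$ (with the sign fixed once and for all by the conventions), i.e. the bivector $\mu = -\sum_i \partial_{\theta_i}\wedge \partial_{\xi_i}$.

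Then I would identify this bivector with the inverse of $-\omega_{can}$: since $\omega_{can} = \sum_i d\theta_i\wedge d\xi_i$, the associated bivector (inverting the map $T \to T^*$, $\theta \mapsto i_\theta\omega_{can}$) is $\sum_i \partial_{\xi_i}\wedge\partial_{\theta_i} = -\sum_i \partial_{\theta_i}\wedge\partial_{\xi_i}$, so $\omega_{can}^{-1} = -\mu$, equivalently $\mu^{-1} = -\omega_{can}$, as claimed. Finally I would check independence of the chosen coordinates and compatibility on overlaps: the bracket $\{,\}$ was defined intrinsically in \S\ref{def} (it does not depend on coordinates or on the lift), and both $\eta_{can}$ and $\omega_{can}$ are canonical, so the local identity globalizes; one only needs that the two constructions glue, which follows from functoriality of the $p$-curvature isomorphism (\ref{centrdif}) under restriction to affine opens. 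The main obstacle I anticipate is the careful bookkeeping of the sign and of the mod-$p^2$ expansion of $[\partial_j^p, x_i^p]$ in the PD (divided-power) differential operator ring — one must be sure that the divided-power structure does not introduce extra terms modulo $p^2$ beyond the expected $p\,\delta_{ij}$, and that the identification of $Z_0$ with $S^{\cdot}T_{X_0^{(1)}/S_0}$ is normalized consistently with the chosen sign of $\eta_{can}$. Everything else is a routine local coordinate computation.
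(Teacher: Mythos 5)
Your overall plan --- pass to \'etale coordinates, compute the mixed bracket $\{x_i^p,\partial_j^p\}$ directly from formula (\ref{poi}), and compare with $\omega_{can}$ --- is a legitimate unpacking of what the paper simply delegates: the paper's proof consists of citing \cite{kk}, Lemma~2, for $X_n=\bA^m_{S_n}$ and noting that the statement is \'etale-local. So your route is the same in substance, just written out. Your treatment of the pure brackets $\{x_i^p,x_j^p\}=\{\partial_i^p,\partial_j^p\}=0$ is also fine (the $x_i$, and separately the $\partial_i$, commute exactly in $D_{U_n/S_n}$, so no mod-$p^2$ discussion is needed). One small imprecision: $D_{U_n/S_n}$ in \'etale coordinates is generated over $\cO_{U_n}$ by $\partial_1,\dots,\partial_d$ alone; the divided powers live on the PD ideal of the diagonal, not on the operators, so ``together with their divided powers'' is incorrect, though harmless here.

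The genuine gap is the sign, which is the entire content of the lemma. Your ``clean way to see the sign'' does not work: $[\partial^p,x]=p\,\partial^{p-1}$, which vanishes modulo $p$ but precisely not modulo $p^2$, and it is the mod-$p^2$ term that feeds the bracket; so no argument that starts from $[\partial^p,x]=0$ can be correct. The honest computation uses $\partial^p x^p=\sum_{j=0}^p\binom{p}{j}^2 j!\,x^{p-j}\partial^{p-j}$; modulo $p^2$ every term with $0<j<p$ dies (two factors of $\binom{p}{j}$), so $[\partial^p,x^p]\equiv p!\equiv -p\pmod{p^2}$ by Wilson's theorem, whence $\{x_i^p,\partial_j^p\}=\tfrac1p[x_i^p,\partial_j^p]=+\delta_{ij}$, not $-\delta_{ij}$ as you assert. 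Independently, your last step is internally inconsistent: you write $\mu=-\sum_i\partial_{\theta_i}\wedge\partial_{\xi_i}$ and $\omega_{can}^{-1}=\sum_i\partial_{\xi_i}\wedge\partial_{\theta_i}=-\sum_i\partial_{\theta_i}\wedge\partial_{\xi_i}$, which are literally the same bivector, so what you have derived is $\omega_{can}^{-1}=\mu$, not $\omega_{can}^{-1}=-\mu$. To actually prove $\mu^{-1}=-\omega_{can}$ you must fix and then carefully track every sign convention --- the pairing $\langle\mu,dx\otimes dy\rangle$ defining $\mu$, the normalization $\eta_{can}=\sum\theta_i\,d\xi_i$ and $\omega_{can}=d\eta_{can}$, and the orientation of the $p$-curvature isomorphism --- or else simply cite \cite{kk} as the paper does. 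Writing ``the unit is $\pm1$, fixed by conventions'' defers exactly the point at issue.
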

\begin{proof}
This is proven in (\cite{kk}, Lemma 2) for $X_n= \bA^m_{S_n}$. The general case follows since the statement is local for \'etale topology.
 \end{proof}
If $\cA$ is a sheaf of algebras on a site $Y$ the presheaf $U\mapsto W_n(\cA(U))$ is a sheaf denoted by $W_n(\cA)$. 
\begin{Th}\label{3} There is a canonical isomorphism of sheaves of $f_0^{\prime -1}\cO_{S_n}$-algebras on $X^{(1)}_0$
 \begin{equation}\label{diffgen}
W_{n+1}(F_{X_0/S_0 *}(Z_0)) \otimes_{  W_{n+1}( f_0^{\prime -1}\cO_{S_0})} f_0^{\prime -1}\cO_{S_n} \iso  F_{X_0/S_0 *}(Z_n).
\end{equation} 
 \end{Th}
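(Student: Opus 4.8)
The plan is to deduce Theorem \ref{3} from the affine results of the previous two sections by localizing on $X_n$, using Lemma \ref{konts} to verify the non-degeneracy hypothesis and --- in characteristic $2$ --- to supply a canonical primitive, and then gluing the resulting local isomorphisms.

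First I would choose an affine open cover $\{U\}$ of $X_n$ with each $f_n(U)$ contained in an affine open $V=\Spec R_n$ of $S_n$. Since $D_{X_n/S_n}$ is locally free over $\cO_{X_n}$, the algebra $A_n:=\Gamma(U,D_{X_n/S_n})$ is flat over $R_n$ (hence over $\bZ/p^{n+1}\bZ$), its reductions are $A_m=\Gamma(U_m,D_{X_m/S_m})$, and, because the center sheaf $Z_m$ is by definition $V'\mapsto Z(\Gamma(V',D_{X_m/S_m}))$, one has $Z(A_m)=\Gamma(U_m,Z_m)$. The $p$-curvature isomorphism (\ref{centrdif}) identifies $F_{X_0/S_0 *}(Z(A_0))$ over $U_0^{(1)}$ with the functions on the cotangent space ${\bf T}^*_{X_0^{(1)}/S_0}$ restricted to that chart, so in particular $\Spec Z(A_0)$ is smooth over $R_0$; under this identification the biderivation $\{,\}$ of \S\ref{def} attached to $A_1$ is the bivector field $\mu$, which by Lemma \ref{konts} equals $-\omega_{can}^{-1}$. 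Thus on every chart the deformation $A_n$ of $A_0$ is non-degenerate, the associated symplectic form is $-\omega_{can}=d(-\eta_{can})$, and Theorem \ref{p} (for $p\ne 2$) or Theorem \ref{2} (for $p=2$, taking the primitive $\eta=-\eta_{can}$) yields, for every $0\le m\le n$, a ring isomorphism $W_{m+1}(Z(A_0))\otimes_{W_{m+1}(R_0)}R_m\iso Z(A_m)$ given by the explicit formulas (\ref{defin}) resp. (\ref{dmaptwo}).

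Next I would check that these local isomorphisms are the restrictions of a single morphism of sheaves. The expression $\sum_i p^i\tilde z_{i+1}^{p^{m-i}}$ (and its characteristic-$2$ correction) defining $\phi_m$ depends only on the sections $z_i$ of $Z_0$ and not on the local liftings $\tilde z_i$, by parts (4) and (5) of Lemma \ref{lm1}; and in characteristic $2$ the term $\widetilde{z^{[2]}}$ is built from the restricted operation (\ref{rst2}) with $\eta=-\eta_{can}$, which is a \emph{global} section of $\Omega^1_{{\bf T}^*_{X_0^{(1)}/S_0}/S_0}$. Hence $\phi_m$ glues to a morphism of sheaves $W_{m+1}(F_{X_0/S_0 *}(Z_0))\to F_{X_0/S_0 *}(Z_m)$, compatible (by \S\ref{wittvectors}) with the Witt-vector structure maps, and therefore descends to a morphism $\Phi_m\colon W_{m+1}(F_{X_0/S_0 *}(Z_0))\otimes_{W_{m+1}(f_0^{\prime -1}\cO_{S_0})} f_0^{\prime -1}\cO_{S_n}\to F_{X_0/S_0 *}(Z_m)$. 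Since the formation of $W_{m+1}$ of a quasi-coherent sheaf of $\bF_p$-algebras, and the tensor product appearing here, commute with passage to sections over the affine opens $U$ (equivalently, with stalks), the map $\Phi_m$ is chart by chart the isomorphism produced above; being an isomorphism on a cover, $\Phi_m$ is an isomorphism. Taking $m=n$ gives (\ref{diffgen}).

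The only point genuinely requiring care is the gluing in the characteristic $2$ case: the correction terms $\widetilde{z^{[2]}}$ in (\ref{dmaptwo}) must be formed using a primitive that is defined globally and canonically, and this is exactly what Lemma \ref{konts} provides via the contact form $\eta_{can}$ on the cotangent bundle (equivalently, the canonical restricted Poisson structure on $S^{\cdot}T_{X_0^{(1)}/S_0}$); with a non-canonical local choice of primitive the maps of Theorem \ref{2} on different charts would not agree on overlaps. The identification $\Gamma(U,Z_m)=Z(\Gamma(U,D_{X_m/S_m}))$ for affine $U$, and the compatibility of the sheafy Witt- and tensor-product constructions with restriction to the cover, are routine, using quasi-coherence together with the vanishing of higher cohomology on affine opens.
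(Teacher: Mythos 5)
Your proposal follows the paper's proof exactly: reduce to an affine chart $U$ over $\Spec R_n\subset S_n$, use the $p$-curvature isomorphism (\ref{centrdif}) for smoothness of $Z(A_0)$ and Lemma \ref{konts} for non-degeneracy and exactness with primitive $-\eta_{can}$, apply Theorems \ref{p} and \ref{2}, and glue; your elaboration on why the local $\Phi_n$ agree on overlaps (independence of liftings, globality of $\eta_{can}$) just spells out what the paper compresses into the phrase ``there exists a unique isomorphism of sheaves of algebras that induces $\Phi_n$.'' The only small imprecision is calling $D_{X_n/S_n}$ ``locally free over $\cO_{X_n}$''; it is a filtered union of finite free modules (hence quasi-coherent and flat), which is what the flatness conclusion actually requires.
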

\begin{proof}
Let $U\subset X_n$ be open affine subset lying over an open affine subset $W=spec \, R_n \subset S_n$.
Then $R_n$ is flat algebra  over $\bZ/p^{n+1}\bZ$ and $A_n= \Gamma(U,  D_{X_n/S_n})$
is a flat algebra over $R_n$.  By (\ref{centrdif})  the center $Z(A_0)$ of its reduction modulo $p$ 
is isomorphic to $$S^{\cdot}T_{\cO(U)/R_0} \otimes _{F_{R_0}} R_0$$ which is smooth over $R_0$.
Moreover, Lemma \ref{konts} shows that the deformation  $A_n$ is non-degenerate and the associated $2$-form is the differential of a canonical $1$-form, $-\eta_{can}$.
Thus, by Theorems \ref{p} and \ref{2} we get a canonical isomorphism
$$\Phi_n: W_{n+1}(Z(A_0))\otimes _{W_{n+1}(R_0)} R_n \iso Z(A_{n})$$
There exists a unique isomorphism of sheaves of algebras (\ref{diffgen}) that induces $\Phi_n$  
for each pair $U,W$ as above. 
\end{proof}
Combing (\ref{diffgen}) with (\ref{centrdif}) we find an isomorphism
$$W_{n+1}(S^{\cdot}T_{X^{(1)}_0/S_0}) \otimes_{  W_{n+1}( f_0^{\prime -1}\cO_{S_0})} f_0^{\prime -1}\cO_{S_n} \iso  F_{X_0/S_0 *}(Z_n).$$

\begin{rem} The fact that the center of $D_{X_m/S_m}$ depends only on $X_0 \to S_n$ and not on the deformation $X_n \to S_n$ is not surprising:  the category of  quasi-coherent $D_{X_m/S_m}$-modules on $X_m$
is equivalent to the category of quasi-coherent crystals on $X_0/S_n$ (\cite{bo}). In particular its categorical center\footnote{Recall, that the center of a category $\cA$ is the ring of endomorphisms of the identity functor $Id_\cA: \cA\to \cA$.}, which is just $Z_n$, is isomorphic to the center of the category of crystals. 
\end{rem}


\end{document}